\newtheorem{theorem}{Theorem}[section]
\newtheorem*{theorem*}{Theorem}
\newtheorem*{claim}{\hspace{20pt} Claim}
\newtheorem{corollary}[theorem]{Corollary}
\newtheorem{lemma}[theorem]{Lemma}
\newtheorem{proposition}[theorem]{Proposition}
\DeclareMathOperator{\esslim}{ess\lim\ }
\date{}
\author{{\bf Antonio J. Ure\~na}}
\title{Dynamics of periodic second-order equations between an ordered pair of lower and upper solutions\thanks{Supported by project MTM2008-02502, Ministerio de Educaci\'on y Ciencia, Spain, and FQM2216, Junta de Andaluc\'ia.}}
\begin{document}
\maketitle
\begin{abstract}
We consider periodic second-order equations having an ordered pair
of lower and upper solutions and show the existence of asymptotic
trajectories heading towards the maximal and minimal periodic solutions which lie
between them.
 \end{abstract}

{\em Key words:}{ Lower and upper solutions, dynamics, asymptotic solutions, instability.}

 \section{Introduction}
 Let $f=f(t,u,\dot u)$ be $T$-periodic in time and consider the
 second-order equation
 \begin{equation}\label{eu0}
-\ddot u=f(t,u,\dot u)\,.
\end{equation}

  A well-known strategy to find $T-$periodic solutions is the so-called lower and upper solutions method. Roughly speaking, this approach requires finding
   periodic functions $\alpha\leq\beta$ with $-\ddot\alpha\leq f(t,\alpha,\dot\alpha)$ and $-\ddot\beta\geq f(t,\beta,\dot\beta)$, and under some
   conditions on the dependence of $f$ with respect to $\dot u$, it guarantees the
  existence of some periodic solution $x$  between them.

As a model, we may think on the  $T$-periodic problem for the {\em
damped pendulum equation}:

\begin{equation}\label{eu00}
-\ddot u=c\,\dot u+a\sin u\,,
\end{equation}
where $c\geq 0$ and $a>0$ are given parameters. We observe that
$$\alpha(t)\equiv\frac{\pi}{2}\,,\hspace{2cm}\beta(t)\equiv\frac{3\pi}{2}$$ are, respectively,
lower and upper solutions. They enclose the $T-$periodic solution
$x(t)\equiv\pi$, which is unstable. Moreover, this equation possesses other solutions which are asymptotic to $x$; infinitely many in the
past and also infinitely many in the future.

A second look at this example shows that
$$\hat\alpha(t)\equiv\frac{\pi}{2}-2\pi\,,\hspace{2cm}\hat\beta(t)\equiv\frac{3\pi}{2}+2\pi$$ also make up an
ordered pair of lower and upper solutions. This time there are
several periodic solutions, with different dynamics, between them.
For instance, $x(t)\equiv 0$ is stable in the future (throughout
this paper stability is understood in the the Lyapunov sense) . But
the {\em minimal} and the {\em maximal} periodic solutions of
(\ref{eu00}) which lie between $\hat\alpha$ and $\hat\beta$ are,
respectively,
$$x_{\min}(t)\equiv-\pi\,,\hspace{2cm} x_{\max}(t)\equiv3\pi\,,$$ and both of them
are unstable (we call instability to the logical negation of Lyapunov stability). Indeed, both $x_{\min}$ and $x_{\max}$ are again the limit of infinitely many
asymptotic solutions.

In this paper we generalize this fact for more general equations
(\ref{eu0}) having an ordered pair $\alpha\leq\beta$ of periodic
lower and upper solutions. Assuming that, for instance, $\alpha$
does {\bf not} solve our equation (\ref{eu0}) we show that the minimal periodic solution $x_{\min}$ lying
between $\alpha$ and $\beta$ is unstable; moreover, it is the limit
of many asymptotic solutions, in the past and in the
future  (Theorem \ref{th1} and Corollary \ref{cor1}). Of course, a similar statement holds for the maximal
periodic solution $x_{\max}$ if $\beta$ does not solve (\ref{eu0}).

This result raises the question about the case in which
$\alpha<\beta$ are ordered $T-${\em periodic solutions} of our
equation (\ref{eu0}). Is it still possible to find an unstable solution between them? The answer is affirmative in the conservative case $-\ddot u=f(t,u)$ (Corollary \ref{cor3}). For general, derivative-depending equations it may not true, but we shall see
that when the periodic solutions are {\em neighboring}, then one of
them must be unstable and  possess many asymptotic
solutions (Theorem \ref{th2} and Corollary \ref{cor2}).

The instability of the periodic solutions obtained by the method of
lower and upper solutions was previously studied by Dancer and
Ortega \cite{dan-ort}. When the lower and upper solutions are
strict and the number of periodic solutions between them is finite,
they showed that at least one of them must be unstable. This result
was obtained as an application of a general theorem on the index of
stable fixed points in two dimensions whose proof depended on a
nonelementary result of planar topology: Brouwer's lemma on
translation arcs.

 Even though the main result of this paper can be seen as a generalization of Proposition 3.1 of \cite{dan-ort}, our approach, which
 is inspired on the Aubry-Mather theory as presented by Moser \cite{mos}, is based on the more elementary concepts of maximal and minimal solutions of Dirichlet and periodic boundary value problems. Moreover, we do not need to assume the finiteness of the set of periodic solutions comprised between the lower and upper solutions and we shall obtain more precise information on the dynamics: the existence of branches of asymptotic solutions.

This paper is distributed in five sections. In Section \ref{sec2} we
make precise the framework which we shall use and collect some basic
facts on upper and lower solutions which will be needed in the proofs. We do not
pretend any originality in this section, whose contents are mostly
well-known in slightly different frameworks. To keep the pace of the exposition, the proofs of these results will be postponed to the Appendix (Section \ref{sec5}). The
main results of this paper are stated and proved in Sections
\ref{sec3} (where we study the dynamics between an ordered pair of lower and upper solutions) and \ref{sec4} (devoted to the dynamics between two ordered solutions).

Some remarks on the notation (previously employed). Through most of the paper the Greek letters
$\alpha$ and $\beta$ will be kept, respectively, for lower and upper
solutions; however, in Section \ref{sec4} they will represent two ordered periodic solutions. We shall also utilize different notations to distinguish
solutions depending on the kind of  boundary conditions that they satisfy.
In this way, periodic solutions will be usually written as $x$,
the letter $y$ will be used to emphasize that our solution satisfies some Dirichlet conditions, and $u$
will stand for other solutions, not necessarily verifying
any particular boundary condition. Finally, it will be convenient to
adopt the following convention: given functions $h_1,h_2$ defined on
respective domains $I_1,I_2\subset\mathbb R$, and some set $J\subset
I_1\cap I_2$, we shall simply say $h_1\leq h_2$ (resp., $h_1<h_2$) on $J$ instead of
$h_1(t)\leq h_2(t)$ (resp., $h_1(t)< h_2(t)$) for any $t\in J$. When there is no possible ambiguity on the domain we shall say $h_1\leq h_2$ or $h_1<h_2$ meaning that the inequality holds pointwise. Given functions $u,v,w:I\to\mathbb R$ we shall say that {\em $w$ lies between $u$ and $v$ on $I$} to mean $u\leq w\leq v$ on $I$.

 It was shown in \cite{ure} that periodic minimizers are unstable. I am indebted to Prof. P. Omari for pointing out to me  that it implies the instability of periodic solutions given by the lower and upper solutions method when the problem has a variational structure and formulating the question about the nonvariational case.

Last but not least, I want to express my gratitude to Prof. R.
Ortega directing me to Moser's book \cite{mos}, as well as for his invaluable advice on many aspects of this paper,
including the regularity result on lower and upper solutions
described in the Appendix.

 \section{Some basic facts on lower and upper solutions}\label{sec2}

In this paper, the nonlinearity $f:\mathbb R^3\to\mathbb R$,
$(t,u,\dot u)\mapsto f(t,u,\dot u)$ will {\em always} be assumed to
be continuous and $T$-periodic in time. The function $\alpha:\mathbb
R\to\mathbb R$ is called a {\em lower solution} of the periodic
problem
\begin{equation*}
(P)\equiv\begin{cases} -\ddot x=f(t,x,\dot x)\,,\vspace{0.3cm}\\
x(t)=x(t+T)\ \text{ for any } t\in\mathbb R\,.
\end{cases}
\end{equation*}
provided that it is Lipschitz-continuous and $T$-periodic, and
verifies the inequality
 \begin{equation}\label{eu1}
    -\ddot\alpha\leq f(t,\alpha,\dot\alpha)\,,
    \end{equation}
in the distributional sense on the real line. With other words,
\begin{equation}\label{eu2} \int_{-\infty}^\infty\dot\alpha(t)\dot\varphi(t)\,dt\leq\int_{-\infty}^\infty f(t,\alpha(t),\dot\alpha(t))\varphi(t)\,dt\,,
\end{equation}
for any nonnegative $\varphi\in \mathcal C^\infty(\mathbb R)$ having compact support. Upper solutions $\beta$  are defined similarly after changing the sign of inequalities (\ref{eu1},\ref{eu2}).

Of course, if $\alpha$ is a  $\mathcal C^2$ function, then (\ref{eu2}) is
equivalent to (\ref{eu1}) holding pointwise. But the
definition above includes also some functions displaying angles. We
point out an example which will be used later. Let
$\alpha:[0,T]\to\mathbb R$ be a solution of our equation (\ref{eu0})
such that $\alpha(0)=\alpha(T)$ but $\dot\alpha(0)>\dot\alpha(T)$,
and extend it to the real line by periodicity. The resulting,
piecewise $\mathcal C^2$ function, is a lower solution of $(P)$, as one
may easily check using an integration-by-parts argument.

Notice also
that our concepts of lower and upper solutions are nonstrict;
periodic solutions are examples of lower and upper solutions.

The distributional notion of lower and upper solutions adopted in
this paper is routinely used in PDE problems but may seem unnecessarily general here. Indeed, our main
arguments could be carried out using only piecewise-$\mathcal C^2$
lower and upper solutions, even though the results obtained in this way would lose some generality. Our choice will imply some relatively small variations with respect to other versions in the literature when showing the results of this Section, but otherwise it will not lead to additional difficulties in the proofs. Incidentally we observe that lower and upper solutions in the distributional sense must be somewhat more regular than merely Lipschitz-continuous; for instance, the derivative should have side limits at each point (it will be shown in the proof of Lemma \ref{ML} in the Appendix).

If  $f=f(t,u)$ does not depend on $\dot u$, then the existence of an
ordered pair $\alpha\leq\beta$ of lower and upper solutions for
$(P)$ implies the existence of a solution between them. However,
this result fails to hold for general, derivative-dependent
nonlinearities as considered in this paper, see \cite{dec-hab}, Example 4.1, pp. 43-44. To ensure that the
method of lower and upper solutions works for $(P)$ one must add
some further condition, which may consist on a special form of our
equation (the Rayleigh equation, the Li\'enard equation...) or some
assumption on the growth of $f$ with respect to $\dot u$ (such as
the Bernstein condition or some kind of Nagumo condition); see \cite{dec-hab}, Chapter I, Section 4 for a detailed study of all these possibilities. In this paper we opt for the classical, two-sided Nagumo condition:
\begin{enumerate}
\item[{\bf [N]}]  There exists a continuous function $\varphi:[0,+\infty)\to(0,+\infty)$ with $$\int_0^{+\infty}\frac{v}{\varphi(v)}\,dv>\max_{\mathbb R}\beta-\min_{\mathbb R}\alpha\,,$$ and such that  $|f(t,u,\dot u)|\leq\varphi(|\dot u|)$ if $\alpha(t)\leq u\leq\beta(t)\,.$
\end{enumerate}

This assumption makes the usual existence result for the periodic method of lower and upper solutions hold:

\begin{proposition}[Upper and lower solutions method for periodic problems]\label{prop0}
{Let $(P)$ have an ordered pair $\alpha\leq\beta$ of lower and upper
solutions and assume {\bf [N]}. Then there exists a solution $x$ of
$(P)$ with $\alpha\leq x\leq\beta$.}
\end{proposition}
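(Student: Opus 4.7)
The standard approach is to modify $(P)$ so that its nonlinearity becomes bounded and continuous, produce a $T$-periodic solution of the modified problem via Schauder's fixed point theorem, and then use the lower/upper solution inequalities to show that this solution actually lies in the strip $[\alpha,\beta]$.

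First, the classical Nagumo lemma applied to hypothesis \textbf{[N]} yields a constant $R>\max\{\|\dot\alpha\|_\infty,\|\dot\beta\|_\infty\}$ with the property that every $C^2$ function $u$ on $\mathbb{R}$ satisfying $\alpha\le u\le\beta$ and $-\ddot u=f(t,u,\dot u)$ must obey $\|\dot u\|_\infty\le R$. Define the truncations $p(t,u):=\max\{\alpha(t),\min\{u,\beta(t)\}\}$ and $q(v):=\max\{-R,\min\{v,R\}\}$, and consider the modified $T$-periodic problem
\begin{equation*}
-\ddot u + u \;=\; f(t,p(t,u),q(\dot u)) + p(t,u).
\end{equation*}
The right-hand side defines a bounded, continuous Nemytskii operator from the space $C^1_T$ of $T$-periodic $C^1$ functions into $C^0_T$, while $Lu:=-\ddot u+u$ is a linear isomorphism from $C^2_T$ onto $C^0_T$ with $L^{-1}\colon C^0_T\to C^1_T$ compact. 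Composing gives a compact self-map of $C^1_T$ with bounded image, so Schauder's theorem provides a $T$-periodic $C^2$ fixed point $u$ of the modified problem.

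The main obstacle, and the step where the distributional character of the lower/upper solutions really enters, is to establish $\alpha\le u\le\beta$. Once this is done the truncation $p$ is inactive, and $u$ satisfies $-\ddot u=f(t,u,q(\dot u))$; the Nagumo estimate of the first step then forces $|\dot u|\le R$, giving $q(\dot u)=\dot u$, and $u$ solves $(P)$. I would argue that $u\le\beta$ by contradiction (the bound $u\ge\alpha$ being symmetric): set $v:=(u-\beta)_+$ and suppose $v\not\equiv 0$. On $\{v>0\}$ the modified equation reads $-\ddot u = f(t,\beta,q(\dot u))-(u-\beta)$, with strictly negative penalty. The distributional upper-solution inequality for $\beta$ extends by routine mollification to nonnegative Lipschitz test functions with compact support, and hence, by $T$-periodicity, to the periodic Lipschitz function $v$ itself. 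Testing both relations against $v$, integrating by parts, and using $\dot v=\dot u-\dot\beta$ a.e.\ on $\{v>0\}$ together with $R\ge\|\dot\beta\|_\infty$, one is led to an inequality of the form
\begin{equation*}
\int_0^T\bigl(|\dot v|^2+v^2\bigr)\,dt \;\le\; \int_0^T\bigl[f(t,\beta,q(\dot u))-f(t,\beta,\dot\beta)\bigr]\,v\,dt.
\end{equation*}
The delicate point is to absorb the right-hand side into the left, thereby forcing $v\equiv 0$; this is achieved through a Young-type estimate exploiting that $q(\dot u)-\dot\beta=\dot v$ pointwise on the subset of $\{v>0\}$ where $|\dot u|\le R$, combined with the uniform continuity of $f$ on the bounded set where its arguments vary.
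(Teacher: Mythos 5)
Your overall architecture is the same as the paper's: your truncated nonlinearity $f(t,p(t,u),q(\dot u))+p(t,u)-u$ is exactly the modified $\tilde f$ of the Modification Lemma \ref{ML} (with $\gamma=p$, $\delta=q$, $K=R$), the reformulation $-\ddot u+u=b(t,u,\dot u)$ with $b$ globally bounded is (\ref{eu15}), and the periodic solution is produced by Schauder just as in the paper. The genuine gap is in the confinement step $\alpha\le u\le\beta$, which you attack by testing against $v=(u-\beta)_+$. The inequality you reach,
\begin{equation*}
\int_0^T\bigl(|\dot v|^2+v^2\bigr)\,dt\;\le\;\int_0^T\bigl[f(t,\beta,q(\dot u))-f(t,\beta,\dot\beta)\bigr]v\,dt\,,
\end{equation*}
cannot be closed under the standing hypotheses, because $f$ is only \emph{continuous} in $\dot u$. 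Uniform continuity bounds the bracket by $\omega(|\dot v|)$ for some modulus $\omega$, and $\omega(|\dot v|)\,v$ is in general not dominated by $|\dot v|^2+v^2$: already for $f$ Lipschitz in $\dot u$ with constant $L$ (the damped pendulum with large damping), the Young splitting leaves $\tfrac{L^2}{2}\int|\dot v|^2$ on the right, which the single copy of $\int|\dot v|^2$ on the left cannot absorb; for a nonlinearity behaving like $C\sqrt{|\dot u|}$ no Young-type estimate works at all. (When $f$ is \emph{linear} in $\dot u$ the offending term is $c\int\dot v\,v=0$ by periodicity, which may be what makes the estimate look viable; that cancellation is not available in general.)

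The repair is to localize rather than integrate, and this is precisely what statement \emph{(ii)} of Lemma \ref{ML} provides. Suppose $u-\beta$ attains a positive maximum at an interior $t_0$. The regularity result for upper solutions in the Appendix shows that the one-sided essential limits of $\dot\beta$ at $t_0$ exist and both coincide with $\dot u(t_0)$; hence, on a sufficiently small neighbourhood of $t_0$, mere continuity of $f$ makes $|f(t,\beta,q(\dot u))-f(t,\beta,\dot\beta)|$ smaller than $\tfrac12\bigl(u(t_0)-\beta(t_0)\bigr)$, so the penalty term wins and $h:=\beta-u$ satisfies $-\ddot h\ge\tfrac12\bigl(u(t_0)-\beta(t_0)\bigr)>0$ distributionally near $t_0$. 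The one-dimensional distributional maximum principle (also proved in the Appendix) then forces $h$, which has a local minimum at $t_0$, to be constant there, contradicting $-\ddot h>0$. Substituting this pointwise argument for your global energy estimate makes the proof correct; the remaining ingredients of your proposal (Nagumo bound $R$, Schauder on $C^1_T$, removal of the truncations once $\alpha\le u\le\beta$ is known) are sound and match the paper.
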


Lower and upper solutions are nowadays standard notions, and have
been studied in depth. We shall be particularly interested on the
structure of the set of solutions of $(P)$ lying between a given
ordered pair $\alpha\leq\beta$ of lower and upper solutions. This
set always contains a minimal and a maximal solution, and the
following improvement of Proposition \ref{prop0} is well-known:

\begin{proposition}\label{upp-low}{Let $(P)$ have an ordered pair $\alpha\leq\beta$ of lower and upper solutions and assume {\bf [N]}. Then there are solutions $x_{\min},\ x_{\max}$ of $(P)$ with
$$\alpha\leq x_{\min}\leq x_{\max}\leq\beta\,,$$
and such that any third solution $x$ of $(P)$ with $\alpha\leq
x\leq\beta$  verifies
$$x_{\min}\leq x\leq x_{\max}\,.$$ }
\end{proposition}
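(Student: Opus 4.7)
The plan is to obtain $x_{\min}$ (resp.\ $x_{\max}$) as a solution realizing the pointwise infimum (resp.\ supremum) of the set
\[
\mathcal{S} := \{x : x \text{ solves } (P),\ \alpha \leq x \leq \beta\}.
\]
By Proposition \ref{prop0}, $\mathcal{S} \neq \emptyset$. The Nagumo assumption \textbf{[N]} provides a uniform a priori bound on $\|\dot x\|_\infty$ for every $x \in \mathcal{S}$, and the continuity of $f$ then bounds $\|\ddot x\|_\infty$ as well. Hence by Arzelà--Ascoli, $\mathcal{S}$ is relatively compact in the $C^1$ topology, and it is closed there because $C^1$-limits of solutions of $(P)$ are again solutions. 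So $\mathcal{S}$ is compact in $C^1$.

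The central step is a simple gluing lemma: whenever $x_1,x_2 \in \mathcal{S}$, the pointwise minimum $w := \min(x_1,x_2)$ is a $T$-periodic upper solution of $-\ddot u = f(t,u,\dot u)$. Indeed, away from crossing points of $x_1$ and $x_2$, $w$ coincides locally with one of them and the upper-solution inequality holds in the classical sense. At a crossing $t_0$ the sign change of $x_1-x_2$ forces $\dot w(t_0^-) \geq \dot w(t_0^+)$, which, after the integration-by-parts argument described for lower solutions in the paragraph following (\ref{eu2}), produces only nonpositive Dirac masses in the distributional $\ddot w$, so the inequality $-\ddot w \geq f(t,w,\dot w)$ survives globally in the sense of distributions. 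Applying Proposition \ref{prop0} to the ordered pair $\alpha \leq w$ then yields some $x_3 \in \mathcal{S}$ with $x_3 \leq \min(x_1,x_2)$. By iteration, for every finite subfamily $\{x_1,\dots,x_n\} \subset \mathcal{S}$ there is some $x \in \mathcal{S}$ with $x \leq \min_{i\leq n} x_i$.

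To produce $x_{\min}$, set $v(t) := \inf\{x(t) : x \in \mathcal{S}\}$ and fix an enumeration $\{q_k\}$ of the rationals in $[0,T]$. Using the gluing step, one constructs inductively a decreasing sequence $(x_n) \subset \mathcal{S}$ with $x_n(q_k) \leq v(q_k) + 1/n$ for every $k \leq n$. By the $C^1$-compactness of $\mathcal{S}$ a subsequence converges to some $x^\star \in \mathcal{S}$; monotonicity forces the whole sequence to converge, and $x^\star(q_k) = v(q_k)$ for every $k$, so by density and continuity $x^\star \equiv v$. Thus $x_{\min} := x^\star$ lies in $\mathcal{S}$ and is pointwise $\leq$ every other element of $\mathcal{S}$. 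The symmetric argument, replacing minima of upper solutions by maxima of lower solutions, produces $x_{\max}$.

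The main obstacle is carrying out the gluing step rigorously in the distributional framework of Section \ref{sec2}: one must check that pasting together two smooth solutions along a possibly intricate crossing set yields a Lipschitz function whose distributional second derivative satisfies the correct one-sided inequality. The sign condition $\dot w(t_0^-) \geq \dot w(t_0^+)$ at jumps of $\dot w$ is precisely what makes the boundary terms arising from integration by parts against a nonnegative test function have the right sign, and this is the content of the piecewise-$C^2$ remark already recorded for lower solutions in Section \ref{sec2}. Once that technical point is in hand, everything else is bookkeeping.
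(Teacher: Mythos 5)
Your argument is correct, and it shares with the paper the one genuinely essential idea: two periodic solutions between $\alpha$ and $\beta$ can be glued into a new one-sided (lower or upper) solution, and Proposition \ref{prop0} can then be re-invoked to push the set of solutions down (or up). Where you diverge is in the extraction mechanism and in how much gluing you do. The paper orders the set $\Gamma$ of periodic solutions between $\alpha$ and $\beta$ pointwise, uses Proposition \ref{prp1} (after reducing to bounded $f$ via the Modification Lemma \ref{ML}) to show every chain has an upper bound, gets a Zorn-maximal element, and then upgrades ``maximal'' to ``maximum'' by contradiction: if some $x_*$ exceeded $x_{\max}$ somewhere, a \emph{single-bump} surgery --- $x_*$ on one interval $[t_-,t_+]$ where it lies above $x_{\max}$, and $x_{\max}$ on the rest of the period --- produces a lower solution $\alpha_*\leq\beta$ with only two crossing points, so the integration-by-parts verification is elementary. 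You instead take the full pointwise $\min(x_1,x_2)$ and build the infimum constructively over a countable dense set, which avoids Zorn's lemma and yields $x_{\min}$ as an explicit monotone limit; the price is exactly the technical point you flag, namely verifying the distributional inequality for $\min(x_1,x_2)$ across a possibly non-isolated crossing set. That verification is true and standard (decompose the open sets $\{x_1<x_2\}$ and $\{x_2<x_1\}$ into countably many intervals and check that the boundary terms all have the right sign, or use the one-dimensional Kato inequality applied to $|x_1-x_2|$), but it is precisely the complication the paper's one-bump surgery is designed to sidestep. Two smaller remarks: your a priori $C^1$ bound from {\bf [N]} uses that a $T$-periodic $x$ has a zero of $\dot x$, which is worth saying explicitly; and when you apply Proposition \ref{prop0} to the pair $\alpha\leq w$ you should note that {\bf [N]} for $(\alpha,\beta)$ is inherited by $(\alpha,w)$ because $\alpha\leq w\leq\beta$ shrinks both the strip and the constant $\max w-\min\alpha$. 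With those points made explicit your proof is complete.
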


Since the dependence of $f$ with respect to $(u,\dot u)$ was only
required to be continuous, initial value problems associated to our
equation (\ref{eu0}) may have many periodic solutions. Sometimes we shall merely look for global branches of asymptotic solutions to a given periodic motion and it will be not problematic. But other results of this paper explicitly deal with the instability of certain periodic solutions; in these cases, uniqueness for initial value problems will be assumed in combination with {\bf [N]}. We emphasize a connection between these assumptions
and the  method of lower and upper solutions:

\begin{proposition}\label{prop1}{Assume that there is uniqueness for initial value problems associated to (\ref{eu0})
and $\alpha\leq\beta$ are lower and upper solutions of $(P)$.
Finally, assume also {\bf [N]}. Then either $\alpha(t)<\beta(t)$ for
any $t\in\mathbb R$ or $\alpha\equiv\beta$.}
\end{proposition}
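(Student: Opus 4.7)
The plan is to argue by contradiction: suppose $\alpha(t_0) = \beta(t_0)$ at some $t_0 \in \mathbb R$, yet $\alpha \not\equiv \beta$. The set $\{t:\alpha(t) < \beta(t)\}$ is then open, nonempty and, by periodicity, proper in $\mathbb R$, so I may relabel $t_0$ as the left endpoint of one of its connected components, which ensures $\alpha(t_0 + h) < \beta(t_0 + h)$ for every $h$ in some right neighborhood $(0, h_1)$.

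The first task is to show that $\alpha$ and $\beta$ possess a common classical derivative at $t_0$. By Proposition \ref{prop0} there is a periodic solution $x$ of $(P)$ with $\alpha \leq x \leq \beta$, so $\alpha(t_0) = x(t_0) = \beta(t_0) =: p$. Writing the distributional inequality $-\ddot\alpha \leq f(t, \alpha, \dot\alpha)$ in the form $\ddot\alpha + g_\alpha \geq 0$, where $g_\alpha \in L^\infty_{\mathrm{loc}}$ (this uses the Lipschitz regularity of $\alpha$ together with the continuity of $f$), the function $\dot\alpha$ plus an antiderivative of $g_\alpha$ is nondecreasing; hence $\dot\alpha$ admits one-sided limits at $t_0$ satisfying $\dot\alpha^-(t_0) \leq \dot\alpha^+(t_0)$. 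Symmetrically $\dot\beta^-(t_0) \geq \dot\beta^+(t_0)$. Combining these with the extremum conditions at $t_0$ for the nonnegative functions $x - \alpha$ and $\beta - x$ (both vanishing at $t_0$) forces $\dot\alpha^\pm(t_0) = \dot x(t_0) = \dot\beta^\pm(t_0) =: q$.

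The second task is to produce two solutions on $[t_0, t_0 + h]$ that share the Cauchy data $(p,q)$ at $t_0$ but disagree at $t_0 + h$. Fixing $h \in (0, h_1)$ and applying the Dirichlet analogue of Proposition \ref{prop0}, which is a standard consequence of the Nagumo condition \textbf{[N]} (see \cite{dec-hab}), I obtain classical solutions $y, \tilde y : [t_0, t_0+h] \to \mathbb R$ with $\alpha \leq y, \tilde y \leq \beta$, $y(t_0) = \tilde y(t_0) = p$, $y(t_0 + h) = \alpha(t_0 + h)$, and $\tilde y(t_0 + h) = \beta(t_0 + h)$. The boundary extremum argument at $t_0$ applied to the nonnegative functions $y - \alpha$ and $\beta - y$ yields $\dot y(t_0) = q$, and the same for $\tilde y$. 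Uniqueness of initial value problems then gives $y \equiv \tilde y$ on $[t_0, t_0 + h]$, so $\alpha(t_0 + h) = y(t_0 + h) = \tilde y(t_0 + h) = \beta(t_0 + h)$, contradicting the choice of $h$.

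The main obstacle I anticipate is the appeal to the Dirichlet variant of the upper and lower solutions method, which does not appear explicitly in the excerpt; within the paper it would presumably be established in the Appendix alongside the other preliminaries, or replaced by an ad hoc shooting construction of $y$ and $\tilde y$. A secondary technical point is the distributional regularity conclusion that $\dot\alpha$ and $\dot\beta$ admit one-sided limits with the stated jump structure, but this is essentially the content of the regularity lemma forward-referenced in the excerpt.
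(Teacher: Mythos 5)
Your argument is correct, but it is organized differently from the paper's. The paper also proceeds by contradiction and also rests on Proposition \ref{DP} plus uniqueness, but it avoids any derivative analysis at the contact point: after invoking the Modification Lemma \ref{ML} to make $f$ bounded between $\alpha$ and $\beta$, it fixes a point $t_0$ where $\alpha(t_0)<\beta(t_0)$ and a contact point $t_1\in(t_0,t_0+T)$, solves the Dirichlet problems $y(t_0)=y(t_0+T)=y_0^{\pm}$ for two levels $y_0^-<y_0^+$ in $(\alpha(t_0),\beta(t_0))$, and obtains ordered solutions $y^-\leq y^+$ which are both pinched to the common value $\alpha(t_1)=\beta(t_1)$; two ordered $\mathcal C^1$ solutions touching at an interior point are tangent there, so uniqueness forces $y^-\equiv y^+$, contradicting $y^-(t_0)<y^+(t_0)$. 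You instead work at the contact point itself, which obliges you to first establish the one-sided derivative structure of distributional lower and upper solutions ($D_l\alpha\leq D_r\alpha$ and $D_l\beta\geq D_r\beta$, proved in the Appendix as step \emph{1} of the proof of Lemma \ref{ML}) and the common velocity $q$ there, before shooting two Dirichlet solutions from $(t_0,p)$ towards the distinct endpoint values $\alpha(t_0+h)<\beta(t_0+h)$. Both routes are sound; the paper's is lighter because the tangency comes for free from the ordering of $y^{\pm}$, whereas yours imports the regularity lemma. Two minor points to tidy up: within this paper Proposition \ref{DP} is stated only for $f$ bounded between $\alpha$ and $\beta$, so you should pass through Lemma \ref{ML} first (as the paper does) rather than appeal to a Nagumo-type Dirichlet theorem from \cite{dec-hab}; and the detour through the periodic solution $x$ of Proposition \ref{prop0} is unnecessary, since applying the side-limit inequalities directly to $\beta-\alpha$ at its minimum $t_0$ already gives $D_l\beta\leq D_l\alpha\leq D_r\alpha\leq D_r\beta\leq D_l\beta$, hence the common derivative $q$.
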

Concerning this result we remark that the lower and upper solutions $\alpha,\beta$ are {\bf not} assumed to be strict. In particular, it also applies when either $\alpha$ or $\beta$ are periodic solutions. We shall use this form of Proposition \ref{prop1} in the proof of Corollary \ref{cor1}.

In this paper we shall also consider Dirichlet-type  boundary value problems:
\begin{equation*}
 (D)\equiv\begin{cases}
-\ddot y=f(t,y,\dot y)\,,\vspace{0.3cm}\\  y(a)=y_a\,,\ \
y(b)=y_b\,,
\end{cases}
\end{equation*}
where $a<b$ and $y_a,y_b\in\mathbb R$ are given. By a lower solution  to $(D)$ we mean a Lipschitz-continuous function $\alpha:[a,b]\to\mathbb R$ such that
\begin{equation}\label{eu4}
\alpha(a)\leq y_a,\hspace{2cm} \alpha(b)\leq y_b\,,
 \end{equation}
 and (\ref{eu1}) holds in the distributional sense on $(a,b)$. With other words, a Lipschitz-continuous function $\alpha$ verifying (\ref{eu4}) and such that (\ref{eu2}) holds for any nonnegative $\varphi\in \mathcal C^\infty(\mathbb R)$ with compact support contained in $(a,b)$. Upper solutions $\beta$ for this problem are then defined similarly by changing the sign of the inequalities.

In this framework it is also well-known that the presence of an
ordered pair of lower and upper solutions does not necessarily imply
the existence of a solution between them. We might use a related Nagumo
condition, but for the purposes of this
paper it will suffice to consider the more restrictive class of
nonlinearities which are bounded between $\alpha$ and $\beta$, i.e., there exists some constant $M>0$ such that
\begin{equation}\label{eu10}
|f(t,u,\dot u)|\leq M\ \ \text{ if }\alpha(t)\leq u\leq\beta(t)\end{equation}
for any  $t\in[a,b]$.
\begin{proposition}[Lower and upper solutions method for Dirichlet
problems]\label{DP}{Let (D) have an ordered pair $\alpha\leq\beta$
of lower and upper solutions and let $f$ be bounded between them. Then there are solutions $y_{\min},\
y_{\max}$ of (D) with $$\alpha\leq y_{\min}\leq y_{\max}\leq\beta\
\text{ on }[a,b]\,,$$ and such that any third solution $y$ of (D)
with $\alpha\leq y\leq\beta$ on $[a,b]$ verifies
$$y_{\min}\leq y\leq y_{\max}\ \text{ on }[a,b]\,.$$}
               \end{proposition}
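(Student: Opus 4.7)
The plan is to first mimic the proof of Proposition~\ref{prop0} in the Dirichlet setting, producing at least one solution of $(D)$ lying between $\alpha$ and $\beta$, and then extract the extremal solutions via a compactness-plus-minimization argument. For the existence step I would replace $f$ by a truncated, globally bounded nonlinearity: setting $\pi(t,u):=\max\{\alpha(t),\min\{\beta(t),u\}\}$ and picking a bounded continuous $\kappa$ with $\kappa(s)s>0$ for $s\ne 0$, define
\[
\tilde f(t,u,\dot u):=f(t,\pi(t,u),\dot u)+\kappa(u-\pi(t,u)).
\]
By (\ref{eu10}), $\tilde f$ is bounded on $[a,b]\times\mathbb R^2$, and a Schauder-type fixed point argument (applied to the inverse of $-d^2/dt^2$ with the prescribed Dirichlet data, which maps a large ball of $C^1([a,b])$ compactly into itself) yields a solution $y$ of $-\ddot y=\tilde f(t,y,\dot y)$, $y(a)=y_a$, $y(b)=y_b$. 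A weak-maximum-principle argument, obtained by testing the distributional defining inequality for $\alpha$ against the nonnegative Lipschitz function $(\alpha-y)^+$ (extended by $0$ outside $[a,b]$), and analogously for $\beta$, forces $\alpha\le y\le\beta$; consequently $\tilde f(t,y,\dot y)=f(t,y,\dot y)$ and $y$ solves $(D)$.

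Let $\mathcal S$ denote the (now nonempty) set of solutions of $(D)$ lying in $[\alpha,\beta]$. Since $|f|\le M$ on the relevant strip, every $y\in\mathcal S$ satisfies $|\ddot y|\le M$, which together with the enclosure $\alpha\le y\le\beta$ yields uniform bounds on $y$ and $\dot y$; hence by Arzel\`{a}--Ascoli (and obvious closedness) $\mathcal S$ is a compact subset of $C^1([a,b])$. The crucial algebraic fact is that whenever $y_1,y_2\in\mathcal S$, the function $w:=\min\{y_1,y_2\}$ is a distributional upper solution of $(D)$ (and, symmetrically, $\max\{y_1,y_2\}$ is a distributional lower solution). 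On the open set $\{y_1\ne y_2\}$ this is immediate, while at a crossing point $t_0$ one checks directly that the derivative of $w$ undergoes a downward jump $\dot w(t_0^-)\ge\dot w(t_0^+)$, which is precisely the distributional condition for being an upper solution (compare the piecewise-$\mathcal C^2$ example discussed in Section~\ref{sec2}). Moreover $w(a)=y_a$, $w(b)=y_b$ and $\alpha\le w\le\beta$, so $w$ is an admissible upper solution for re-applying the existence step.

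To construct $y_{\min}$, let $y^*\in\mathcal S$ minimize the continuous functional $J(y):=\int_a^b y(t)\,dt$ over the compact set $\mathcal S$. If there existed $y\in\mathcal S$ and $t_0\in[a,b]$ with $y(t_0)<y^*(t_0)$, then $w:=\min\{y,y^*\}$ would be a distributional upper solution with the correct boundary values and $\alpha\le w$; applying the existence step to the pair $(\alpha,w)$ would produce $\hat y\in\mathcal S$ with $\hat y\le w$, so in particular $\hat y(t_0)\le y(t_0)<y^*(t_0)$. By continuity the strict inequality would persist on a neighborhood of $t_0$, forcing $J(\hat y)<J(y^*)$ and contradicting minimality; hence $y^*=y_{\min}$. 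The construction of $y_{\max}$ by maximizing $J$ is entirely symmetric. The main technical obstacle I anticipate lies in the distributional bookkeeping---both in the maximum-principle step and in verifying that $\min\{y_1,y_2\}$ is an upper solution without assuming $\alpha,\beta\in\mathcal C^2$---which rests on the fact, alluded to in Section~\ref{sec2}, that distributional lower and upper solutions admit one-sided derivatives at every point.
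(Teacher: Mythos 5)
Your existence step --- the part that produces \emph{some} solution of $(D)$ between $\alpha$ and $\beta$ --- has a genuine gap. First, the penalization has the wrong sign: with $\tilde f=f(t,\pi(t,u),\dot u)+\kappa(u-\pi(t,u))$ and $\kappa(s)s>0$, a solution overshooting $\beta$ satisfies $-\ddot y>f(t,\beta,\dot y)$, the very inequality $\beta$ itself satisfies, so nothing pushes it back into the strip (already for $f\equiv 0$ and $\kappa(s)=s$ the overshoot $w=y-\beta$ obeys $\ddot w=-w$ and oscillates out). The penalization must be subtracted, as in (\ref{eu11}), where the added term is $-u+\gamma(t,u)=-(u-\gamma(t,u))$. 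Second, and more seriously, the comparison argument you propose --- testing the distributional inequality for $\alpha$ against $(\alpha-y)^{+}$ --- does not close when $f$ depends on $\dot u$. Even with the corrected sign, subtracting the two identities leaves
$$\int_{\{y<\alpha\}}|\dot\alpha-\dot y|^{2}\,dt\;\leq\;\int_{\{y<\alpha\}}\big[f(t,\alpha,\dot\alpha)-f(t,\alpha,\dot y)\big](\alpha-y)\,dt\;-\;\big(\text{a nonnegative penalization term}\big)\,,$$
and the first integral on the right is only of order $\|(\alpha-y)^{+}\|_{L^{1}}$: since $f$ is merely continuous and bounded on the strip, not Lipschitz in $\dot u$, this term can be absorbed neither into the left-hand side nor into the penalization term, which is quadratically small. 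This is exactly why the paper proves the enclosure pointwise, in Lemma \ref{ML}\emph{(ii)}: at an interior local minimum $t_{0}$ of $u-\alpha$ with $u(t_{0})<\alpha(t_{0})$, the one-sided essential limits of $\dot\alpha$ exist and satisfy $D_{l}\alpha(t_{0})\leq D_{r}\alpha(t_{0})$ (regularity of lower solutions), the local minimum forces both to equal $\dot u(t_{0})$, and then continuity of $f$ makes the $f$-difference small near $t_{0}$ while the penalization stays bounded below by $\tfrac12(\alpha(t_{0})-u(t_{0}))>0$, giving $-\ddot h>0$ for $h=u-\alpha$ near $t_{0}$ and contradicting the distributional maximum principle. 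What you defer as ``distributional bookkeeping'' is in fact the crux of this step; you should either reproduce that contact-point analysis or invoke Lemma \ref{ML}\emph{(ii)} directly.

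The second half of your argument, extracting $y_{\min}$ and $y_{\max}$, is correct and close in spirit to the paper's. The paper only sketches this proposition, referring to the proof of Proposition \ref{upp-low}, which obtains a maximal element via Zorn's lemma and then upgrades it to a maximum by gluing two crossing solutions into a new lower solution (your ``$\max\{y_{1},y_{2}\}$ is a lower solution'' observation, verified by the same derivative-jump/integration-by-parts device) and re-applying the existence step. Your replacement of Zorn's lemma by compactness of $\mathcal S$ in $\mathcal C^{1}$ (closedness being Proposition \ref{prp1}) together with minimization of $\int_{a}^{b}y\,dt$ is a legitimate and arguably more constructive variant, and the contradiction you derive from $\hat y\leq\min\{y,y^{*}\}$ is sound.
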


    For solutions of Dirichlet problems, the quality of being minimal or maximal is inherited after restriction to a smaller interval.
To state this result in a precise form, let us assume that
$\alpha\leq\beta$ are ordered lower and upper solutions of $(D)$ and
$y:[a,b]\to\mathbb R$ is a solution between them. Given $a\leq\tilde
a<\tilde b\leq b$, the restriction of $y$ to $[\tilde a,\tilde b]$
solves the Dirichlet problem
      \begin{equation*}(\tilde D)\equiv
 \begin{cases}
-\ddot y=f(t,y,\dot y)\,,\vspace{0.3cm}\\  y(\tilde a)=\tilde
y_a\,,\ \ y(\tilde b)=\tilde y_b\,,
\end{cases}
\end{equation*}
for suitable choices of $\tilde y_a$ and $\tilde y_b$. Also, the
restrictions of $\alpha$ and $\beta$ to $[\tilde a,\tilde b]$ become
lower and upper solutions for $(\tilde D)$.

\begin{proposition}\label{prop}{Let $f$ be bounded between $\alpha$ and $\beta$ and let $y$ be the minimal/maximal solution of $(D)$ lying between them. Then,
$y_{\big|[\tilde a,\tilde b]}$ is the minimal/maximal solution of
($\tilde D$) lying between ${\alpha}_{\big|[\tilde a,\tilde b]}$ and
 ${\beta}_{\big|[\tilde a,\tilde b]}$ }
\end{proposition}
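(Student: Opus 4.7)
The plan is to prove the minimal case by contradiction; the maximal case is entirely symmetric. Suppose $y$ is the minimal solution of $(D)$ lying between $\alpha$ and $\beta$, yet there exists another solution $\tilde y:[\tilde a,\tilde b]\to\mathbb R$ of $(\tilde D)$, with $\alpha\leq \tilde y\leq\beta$ on $[\tilde a,\tilde b]$, such that $\tilde y(t_0)<y(t_0)$ at some $t_0\in(\tilde a,\tilde b)$. The aim is to build, out of $y$ and $\tilde y$, an upper solution $w$ of the \emph{original} problem $(D)$ which pinches $y$ strictly from above at $t_0$, and then apply Proposition \ref{DP} to the pair $(\alpha,w)$ to produce a solution of $(D)$ lying between $\alpha$ and $\beta$ which is strictly smaller than $y$ at $t_0$, contradicting minimality.

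Concretely, I define
\begin{equation*}
w(t):=\begin{cases} y(t), & t\in[a,\tilde a]\cup[\tilde b,b],\\ \min\{y(t),\tilde y(t)\}, & t\in[\tilde a,\tilde b].\end{cases}
\end{equation*}
By construction $w$ is continuous (since $\tilde y$ and $y$ agree at $\tilde a,\tilde b$), satisfies $w(a)=y_a$, $w(b)=y_b$, and $\alpha\leq w\leq\beta$ on $[a,b]$. The heart of the argument is to verify that $w$ is an upper solution of $(D)$ in the distributional sense, i.e.\ that
\begin{equation*}
\int_a^b \dot w\,\dot\varphi\,dt\geq \int_a^b f(t,w,\dot w)\,\varphi\,dt
\end{equation*}
for every nonnegative $\varphi\in\mathcal C_c^\infty((a,b))$. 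On the open pieces where $w$ coincides with one of the $\mathcal C^2$ solutions $y$ or $\tilde y$, the inequality holds with equality. The contribution at the gluing times $\tilde a,\tilde b$ and at the interior crossings of $y$ and $\tilde y$ comes from jumps of $\dot w$: a standard argument shows that taking a pointwise minimum of two $\mathcal C^1$ functions produces, at each crossing point, a \emph{downward} jump of the derivative (the left derivative dominates the right one), which contributes a nonnegative Dirac mass to $-\ddot w$ and is therefore compatible with the upper-solution inequality. At $\tilde a$ the left derivative is $\dot y(\tilde a)$ while the right derivative is either $\dot y(\tilde a)$ or $\dot{\tilde y}(\tilde a)$; since $\tilde y\leq y$ immediately to the right, one has $\dot{\tilde y}(\tilde a)\leq\dot y(\tilde a)$, so again the jump is downward. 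The analogous check works at $\tilde b$.

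Once $w$ is known to be an upper solution of $(D)$, one applies Proposition \ref{DP} to the ordered pair $\alpha\leq w$ (the boundedness hypothesis on $f$ is the one already in force). This produces a solution $y^*$ of $(D)$ with $\alpha\leq y^*\leq w\leq\beta$ on $[a,b]$. Because $w(t_0)=\tilde y(t_0)<y(t_0)$, we obtain $y^*(t_0)<y(t_0)$, contradicting the minimality of $y$. This contradiction finishes the proof in the minimal case; for the maximal case, one replaces $\min$ with $\max$ and $w$ plays the role of a lower solution, concluding by the analogous version of Proposition \ref{DP}.

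I expect the one delicate point to be the distributional verification that $w$ is an upper solution across the kinks — specifically the bookkeeping that every jump of $\dot w$ (both at $\tilde a,\tilde b$ and at interior crossings of $y$ with $\tilde y$) is downward. This is morally the same elementary fact noted in the excerpt about periodic extensions with $\dot\alpha(0)>\dot\alpha(T)$ being lower solutions, mirrored for the min/upper case, so an integration-by-parts argument against test functions supported near each kink should close it without any use of regularity beyond Lipschitz continuity.
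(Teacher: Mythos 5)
Your proof is correct and follows essentially the route the paper intends: the paper omits the proof of Proposition \ref{prop}, saying only that it runs parallel to that of Proposition \ref{upp-low}, and that parallel argument is precisely your construction --- glue $y$ with the competitor $\tilde y$ to produce an upper (resp.\ lower) solution of $(D)$ and then invoke Proposition \ref{DP} to contradict minimality (resp.\ maximality). The one cosmetic difference is that the paper's gluing (as in the proof of Proposition \ref{upp-low}) is performed on a single maximal subinterval where the competitor lies strictly on the wrong side of $y$, so the kinks are isolated and the integration-by-parts check is immediate, whereas your global $\min\{y,\tilde y\}$ needs the slightly more delicate (but standard) fact that the pointwise minimum of two solutions is a distributional upper solution even when the contact set $\{y=\tilde y\}$ is complicated.
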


We shall need also some facts on the relative compactness on the $\mathcal C^2$ topology of some sets of solutions of our equation. The next result is not directly related with the method of lower and
upper solutions but follows from the boundedness assumption on $f$. To present it precisely, we choose numbers $a<b$ and assume
that the sequence  $u_n:[a,b]\to\mathbb R$ of solutions of
(\ref{eu0}) is uniformly bounded and pointwise converging. With other words,
\begin{equation}\label{eu9}
\{u_n(t)\}_n\ \text{ converges for every } t\in[a,b]\,,\hspace{1.5cm}(t,u_n(t))\in\mathcal B\ \text{ for all }t\in[a,b]\text{ and }n\in\mathbb N\,,\end{equation}
where $\mathcal B$ is some compact region of $[a,b]\times\mathbb R$.
  \begin{proposition}\label{prp1} {Assume that $f$ is bounded on $\mathcal B\times\mathbb R$ and $(\ref{eu9})$. Then, $u(t):=\lim_{n\to+\infty}u_n(t)$ is again a solution of $(\ref{eu0})$ and $\{u_n\}\to u$ in the
  $\mathcal C^2[a,b]$ topology. Precisely,
  $$\lim_{n\to\infty}(u_n(t)-u(t))=0,\hspace{1.2cm}\lim_{n\to\infty}(\dot u_n(t)-\dot u(t))=0,\hspace{1.2cm}\lim_{n\to\infty}(\ddot u_n(t)-\ddot u(t))=0\,,$$
  uniformly with respect to $t\in[a,b]$.
  }

\end{proposition}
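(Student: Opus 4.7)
The plan is to extract from the differential equation uniform $\mathcal C^2$ bounds on the sequence $\{u_n\}$, then use Arzel\`a--Ascoli together with the given pointwise convergence to upgrade everything to uniform $\mathcal C^2$ convergence. The key preliminary observation is that, writing $\ddot u_n(t)=-f(t,u_n(t),\dot u_n(t))$ and using the hypothesis that $f$ is bounded (say by some constant $M$) on $\mathcal B\times\mathbb R$ together with $(t,u_n(t))\in\mathcal B$, one obtains the uniform estimate $|\ddot u_n(t)|\leq M$ on $[a,b]$ for every $n$. Because $\mathcal B$ is compact, the $u_n$ themselves are uniformly bounded in $\mathcal C^0$, so the mean value theorem provides points $t_n\in(a,b)$ with $|\dot u_n(t_n)|$ uniformly bounded; integrating the bound on $\ddot u_n$ from $t_n$ then yields a uniform bound on $\dot u_n$ over $[a,b]$. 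Thus $\{u_n\}$ is bounded in $\mathcal C^2[a,b]$.

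Next I would use the fact that the uniform bounds on $\dot u_n$ and $\ddot u_n$ make $\{u_n\}$ and $\{\dot u_n\}$ equicontinuous families. By Arzel\`a--Ascoli, from any subsequence of $\{u_n\}$ one may extract a further subsequence converging in $\mathcal C^1[a,b]$ to some $v\in\mathcal C^1[a,b]$. The $\mathcal C^0$-limit of this subsubsequence necessarily coincides with the pointwise limit $u$ given by (\ref{eu9}), so $v=u$, meaning $u\in\mathcal C^1[a,b]$ and $\dot u_{n_k}\to\dot u$ uniformly for that subsubsequence. Since \emph{every} subsequence of $\{u_n\}$ admits a further subsubsequence converging in $\mathcal C^1[a,b]$, and the limit must always be $u$ (with derivative $\dot u$), a standard subsequence argument forces the whole sequence $\{u_n\}$ to converge to $u$ in $\mathcal C^1[a,b]$.

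To close the argument, the continuous function $f$ is uniformly continuous on the compact region $\mathcal B\times[-R,R]$, where $R$ is a uniform bound for $\|\dot u_n\|_\infty$. Uniform convergence $u_n\to u$ and $\dot u_n\to\dot u$ therefore implies $f(t,u_n(t),\dot u_n(t))\to f(t,u(t),\dot u(t))$ uniformly on $[a,b]$, whence $\ddot u_n\to -f(t,u,\dot u)$ uniformly as well. A limit-of-derivatives argument then yields $u\in\mathcal C^2[a,b]$ with $\ddot u(t)=-f(t,u(t),\dot u(t))$, so $u$ solves (\ref{eu0}), and the convergence $u_n\to u$ in $\mathcal C^2[a,b]$ is established.

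The only nontrivial point is the passage from ``every subsequence has a $\mathcal C^1$-convergent subsubsequence'' to ``the whole sequence converges in $\mathcal C^1$''; this is where the hypothesis of pointwise convergence (rather than just boundedness) is essential, since it uniquely determines the limit of every subsubsequence and prevents the possibility of different cluster points with different derivatives.
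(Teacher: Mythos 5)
Your proposal is correct and follows essentially the same route as the paper's own (much more terse) proof: uniform bounds on $u_n$ and $\ddot u_n$ from the boundedness of $f$ on $\mathcal B\times\mathbb R$, a mean-value argument to bound $\dot u_n$, equicontinuity plus Ascoli--Arzel\`a to get $\mathcal C^1$ convergence (with the pointwise limit identifying the cluster point), and then the equation itself to upgrade to $\mathcal C^2$ convergence. Your write-up simply makes explicit the subsequence-identification step that the paper leaves implicit.
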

Some results of this paper will be first shown when the equation is bounded between a given pair $\alpha\leq\beta$ of periodic lower and upper solutions, i.e., assuming that (\ref{eu10}) holds for some constant $M>0$. This assumption will be subsequently relaxed to
  {\bf [N]} by replacing (\ref{eu0}) with a modified equation
  \begin{equation*} -\ddot u=\tilde f(t,u,\dot u)\tag{$\tilde 1$}
  \end{equation*}
  which will be bounded between $\alpha$ and $\beta$ and share many periodic solutions with the original equation. For this reason, the next Lemma  will be evoked several times  throughout this paper.
\begin{lemma}[Modification Lemma]\label{ML}{Let (P) have an ordered pair $\alpha\leq\beta$ of lower and upper solutions and assume {\bf [N]}. Then,  there exists some continuous and $T$-periodic in time $\tilde f:\mathbb R^3\to\mathbb R$  which is bounded between $\alpha$ and $\beta$ and verifies: \begin{enumerate}
     \item[(i)] $\alpha$ and $\beta$ remain, respectively, lower and upper solutions for the $T$-periodic problem associated to $(\tilde 1)$.
   \item[(ii)] Any solution $u:I\to\mathbb R$ of $(\tilde 1)$ for which $u-\alpha$ attains a local minimum
   at some interior point $t_0$ of $I$ satisfies $u(t_0)\geq\alpha(t_0)$. Similarly, given a solution $u:I\to\mathbb R$ of $(\tilde 1)$ such that $u-\beta$ attains a local maximum
   at some interior point $t_0$ of $I$ one has  $u(t_0)\leq\beta(t_0)$.
     \item[(iii)]
   There exists some $\epsilon>0$, not depending on $t_0, u,$ nor the interval $I$, such that $(\ref{eu0})$ and $(\tilde 1)$ have exactly the same solutions $u:I\to\mathbb R$ with $$\alpha\leq u\leq\beta\text{ on }I\,,\hspace{2cm}|\dot u(t_0)|<\epsilon\text{ for some }t_0\in I\,.$$
 \end{enumerate}}
  \end{lemma}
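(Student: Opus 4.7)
The approach is an explicit construction: project the $u$-argument of $f$ into $[\alpha(t),\beta(t)]$, truncate the $p$-argument to $[-N,N]$ with $N$ coming from the Nagumo condition, and add a continuous penalty term that vanishes on the strip and forces $u$ back into the strip otherwise. Before doing this I would first establish the regularity of $\alpha,\beta$ promised at the end of Section~\ref{sec2}. The lower-solution inequality, rewritten as $\ddot\alpha + f(\cdot,\alpha,\dot\alpha)\geq 0$ in $\mathcal{D}'$, makes the left-hand side a nonnegative distribution, hence a Radon measure. Since $f(\cdot,\alpha,\dot\alpha)\in L^\infty$, $\ddot\alpha$ itself is a signed measure, so $\dot\alpha$ is BV and has one-sided limits at every $t$. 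Moreover the atomic part of $\ddot\alpha$ (i.e., the jumps $\dot\alpha^+-\dot\alpha^-$) is nonnegative, since it equals the atomic part of the nonnegative measure $\ddot\alpha+f(\cdot,\alpha,\dot\alpha)\,dt$. Symmetric statements---downward-only jumps---hold for $\dot\beta$.

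Now I would use [N] to fix $N>\|\dot\alpha\|_\infty+\|\dot\beta\|_\infty$ and $\epsilon\in(0,N)$ with $\int_\epsilon^N v/\varphi(v)\,dv>\max\beta-\min\alpha$, and define
\[
\tilde f(t,u,p)\;:=\;f\bigl(t,\,P(t,u),\,\pi_N(p)\bigr) \;+\; \Psi\bigl(P(t,u)-u\bigr),
\]
where $P(t,u)=\min\{\beta(t),\max\{\alpha(t),u\}\}$, $\pi_N(p)=\min\{N,\max\{-N,p\}\}$, and $\Psi(s)=s/(1+|s|)$ is a bounded, continuous, strictly increasing penalty with $\Psi(0)=0$. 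Then $\tilde f$ is continuous, $T$-periodic in $t$, and bounded between $\alpha,\beta$. Since $N$ exceeds $\|\dot\alpha\|_\infty,\|\dot\beta\|_\infty$ and the penalty vanishes on the strip, $\tilde f(t,\alpha,\dot\alpha)=f(t,\alpha,\dot\alpha)$ (and analogously for $\beta$), giving (i). For (iii), both equations coincide whenever $\alpha\leq u\leq\beta$ and $|\dot u|\leq N$, and a classical Nagumo bootstrap shows this $L^\infty$-bound holds throughout $I$: were $|\dot u|$ to reach $N$ while having been $<\epsilon$ at some $t_0\in I$, on the subinterval along which $|\dot u|$ runs from $\epsilon$ to $N$ the estimate $|\ddot u|\leq\varphi(|\dot u|)$ (valid for either equation when $|\dot u|\leq N$) would yield $\int_\epsilon^N v/\varphi(v)\,dv\leq\max\beta-\min\alpha$, contradicting the choice of $N,\epsilon$.

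The delicate part is (ii). Arguing by contradiction, suppose $u$ solves $(\tilde 1)$, $u-\alpha$ has a local minimum at interior $t_0\in I$, and $u(t_0)<\alpha(t_0)$; on an open neighborhood $J\ni t_0$ where $u<\alpha$, set $w=\alpha-u>0$. Then in $\mathcal{D}'(J)$,
\[
\ddot w \;=\; \bigl[\ddot\alpha+f(t,\alpha,\dot\alpha)\bigr] \;+\; \bigl[f(t,\alpha,\pi_N(\dot u))-f(t,\alpha,\dot\alpha)\bigr] \;+\; \Psi(w).
\]
The first bracket is a nonnegative measure, and by continuity $\Psi(w)\geq\Psi(w(t_0))/2>0$ on small $J$. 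The crux is showing the middle $f$-difference is small on small $J$. The local-minimum condition forces the Dini-type inequality $\dot\alpha^+(t_0)\leq\dot u(t_0)\leq\dot\alpha^-(t_0)$; combining with the "upward jumps only" property from the first paragraph collapses this to $\dot\alpha^+(t_0)=\dot\alpha^-(t_0)=\dot u(t_0)$, so that (after passing to the right-continuous BV representative of $\dot\alpha$) $\dot\alpha(t)\to\dot u(t_0)$ as $t\to t_0$ in the ordinary sense. Since $|\dot u(t_0)|<N$, $\pi_N(\dot u)=\dot u$ on small $J$ by continuity of $\dot u$, and uniform continuity of $f$ on the compact set $\{(t,v,q):t\in[0,T],\,\alpha(t)\leq v\leq\beta(t),\,|q|\leq N\}$ makes the $f$-bracket arbitrarily small on sufficiently small $J$. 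Hence $\ddot w$ is bounded below by a positive constant distributionally on $J$, forcing $w$ to be strictly convex there and preventing the interior maximum. The symmetric argument handles $\beta$. The principal obstacle is exactly this step: the mere Lipschitz regularity of $\alpha,\beta$ defeats any direct maximum-principle argument, and it is overcome here by exploiting the sign-restriction on jumps of $\dot\alpha,\dot\beta$ coming from the distributional lower/upper solution inequalities.
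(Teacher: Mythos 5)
Your proposal is correct and follows essentially the same route as the paper's proof: the same truncation of $f$ in both the position and velocity variables with the truncation level fixed by the Nagumo condition, a penalty term vanishing on the strip, and, for part \emph{(ii)}, the same chain of arguments — one-sided essential limits of $\dot\alpha$ exist with upward-only jumps because $\ddot\alpha+f(t,\alpha,\dot\alpha)$ is a nonnegative measure, the local-minimum condition then forces $\dot\alpha(t)\to\dot u(t_0)$, and continuity of $f$ plus a distributional maximum principle yields the contradiction. The only differences are cosmetic (a bounded penalty $\Psi(P(t,u)-u)$ in place of the paper's $\gamma(t,u)-u$, and phrasing the regularity step via the BV jump decomposition rather than via an auxiliary primitive of $-f(t,\alpha,\dot\alpha)$).
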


We notice the following consequence of {\em (ii)-(iii)}: the
periodic solutions of the modified equation $(\tilde 1)$ are the
periodic solutions of the original equation $(1)$ lying between
$\alpha$ and $\beta$. In particular, the maximal and the minimal
$T$-periodic solutions between $\alpha$ and $\beta$ coincide for
both equations.

 The results described in this Section will be discussed with some detail in the Appendix. Next, we present and prove the main results of this paper.

  \section{Maximal or minimal periodic solutions and asymptotic trajectories}\label{sec3}
Our starting point will be the periodic problem $(P)$, which through this
Section we assume to have {\em  an ordered pair
$\alpha\leq\beta$ of lower and upper solutions}. As usually, the solution $u_r:[0,+\infty)\to\mathbb R$ of
(\ref{eu0}) will be called {\em asymptotic in the future} to the periodic
solution $x$ if
\begin{equation}\label{eu17}
\lim_{t\to+\infty}\big(u_r(t)-x(t)\big)=\lim_{t\to+\infty}\big(\dot u_r(t)-\dot
x(t)\big)=0\,.
\end{equation}

Similarly, one may speak of solutions $u_l:(-\infty,0]\to\mathbb R$
which are asymptotic in the past to a given periodic solution.

  \begin{theorem}\label{th1}{Assume {\bf [N]}. Assume also that, for instance,
$\alpha$ is {\bf\em not} a solution of $(P)$ and let $x_{\min}$ be the
minimal solution of $(P)$ lying between $\alpha$ and $\beta$. Then,
for any initial position $$u_0\in\Big[\alpha(0),x_{min}(0)\Big)\,,$$
there are solutions $u_l:(-\infty,0]\to\mathbb R,\ \
u_r:[0,+\infty)\to\mathbb R$ of (\ref{eu0}) with
\begin{equation}\label{eu8}
u_l(0)=u_0=u_r(0)\,,\hspace{1.3cm}\alpha\leq u_l\leq x_{\min}\ \text{ on }(-\infty,0]\,,\hspace{1.3cm}\alpha\leq u_r\leq x_{\min}\ \text{ on }[0,+\infty)\,,
\end{equation}
which are asymptotic to $x_{\min}$ in the past and in the future
respectively. See Fig. 1 below.}

\begin{figure*}[!h!]\label{fig1}
\begin{center}
\includegraphics[scale=0.6]{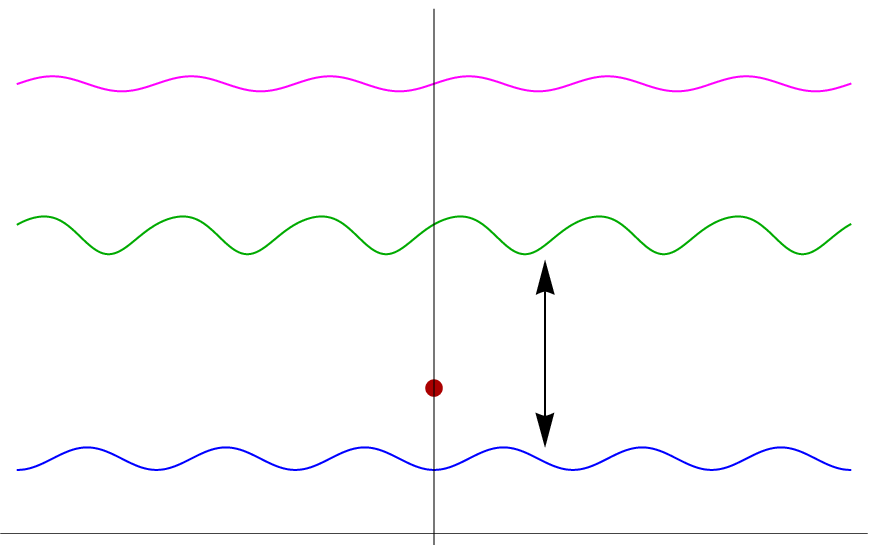}\put(-150, 65.2){\small $x_{\min}$}\put(-150, 88.2){\small $\beta$}\put(-150, 22){\small $\alpha$}\put(-6,-2){\footnotesize $t\rightarrow$}\put(-82, 93){\footnotesize $u$}\put(-82, 103){\footnotesize $\uparrow$}\put(-52, 41){\footnotesize no periodic sols.}\put(-75, -2){\footnotesize $0$}\put(-44, 29){\footnotesize in between}\put(-88, 30.6){\footnotesize $u_0$}\includegraphics[scale=0.6]{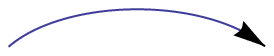}\includegraphics[scale=0.6]{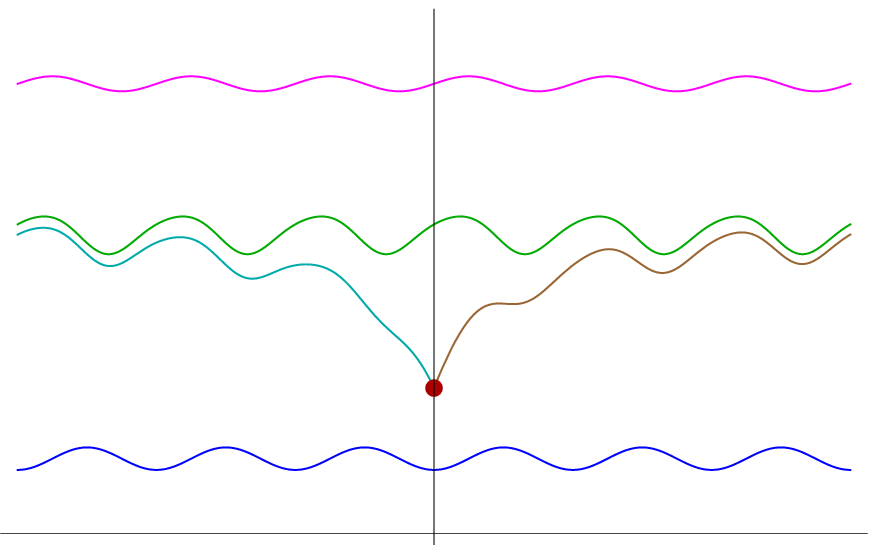} \put(-150, 65.2){\small $x_{\min}$}\put(-150, 88.2){\small $\beta$}\put(-150, 22){\small $\alpha$}\put(-42.8, 43.2){\small $u_r$}\put(-121.4, 45.6){\small $u_l$}\put(-73.8, -2){\footnotesize $0$}\put(-6,-2){\footnotesize $t\rightarrow$}\put(-82, 93){\footnotesize $u$}\put(-82, 103){\footnotesize $\uparrow$} \put(-88, 30.6){\footnotesize $u_0$}
\end{center}\caption{If $\alpha$ is not a solution, then $x_{\min}$ is the limit of many asymptotic solutions.}
\end{figure*}
\end{theorem}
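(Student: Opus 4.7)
The plan is to combine a Dirichlet-minimizer construction with a compactness argument, in the spirit of the Aubry--Mather approach.

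First I would reduce to the case in which $f$ is bounded between $\alpha$ and $\beta$ via the Modification Lemma \ref{ML}: since the modified equation shares the periodic solutions between $\alpha$ and $\beta$ with the original, the minimum $x_{\min}$ is unchanged, and any solution of the modified equation on $[0,+\infty)$ that is asymptotic to $x_{\min}$ will automatically solve (\ref{eu0}) by part (iii) of the lemma. Indeed, $\dot x_{\min}$ either vanishes identically (if $x_{\min}$ is constant) or must attain the value zero at some $s\in[0,T]$ by periodicity and $\int_0^T\dot x_{\min}=0$, so the asymptotic condition forces $|\dot u_r(s+nT)|<\epsilon$ for $n$ large.

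The construction of $u_r$ proceeds by Dirichlet minimizers. For each positive integer $n$, Proposition \ref{DP} yields a minimal solution $y_n:[0,n]\to\mathbb{R}$ of the Dirichlet problem with $y_n(0)=u_0$ and $y_n(n)=x_{\min}(n)$, confined between the ordered pair $\alpha\leq x_{\min}$ of lower and upper solutions. A standard argument (the minimum of two upper solutions is again an upper solution) shows that such minimal Dirichlet solutions are monotone non-decreasing in their boundary data. Combined with Proposition \ref{prop}, which guarantees that restrictions of minimal solutions remain minimal, this gives $y_m|_{[0,n]}\leq y_n$ on $[0,n]$ whenever $m\geq n$, because $y_m(n)\leq x_{\min}(n)=y_n(n)$. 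Hence $\{y_m|_{[0,n]}\}_{m\geq n}$ is pointwise decreasing and bounded below by $\alpha$; by the compactness Proposition \ref{prp1} the convergence is in $\mathcal{C}^2([0,n])$, and a diagonal extraction produces $u_r:[0,+\infty)\to\mathbb{R}$ solving the equation, with $u_r(0)=u_0$ and $\alpha\leq u_r\leq x_{\min}$.

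The crux of the proof is the asymptotic behavior. Assume, for contradiction, there exist $\epsilon>0$ and integers $m_k\to+\infty$ with $|u_r(m_kT)-x_{\min}(0)|+|\dot u_r(m_kT)-\dot x_{\min}(0)|\geq\epsilon$. Setting $w_k(t):=u_r(t+m_kT)$, the $T$-periodicity of the data makes each $w_k$ a solution with $\alpha\leq w_k\leq x_{\min}$ on $[-m_kT,+\infty)$, so by Proposition \ref{prp1} a subsequence converges in $\mathcal{C}^2_{\mathrm{loc}}(\mathbb{R})$ to a solution $w:\mathbb{R}\to\mathbb{R}$ with $\alpha\leq w\leq x_{\min}$ and $w\not\equiv x_{\min}$. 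I would derive the contradiction by showing that $w$ must be $T$-periodic: the minimality of $x_{\min}$ in $[\alpha,\beta]$ forces any periodic solution in $[\alpha,x_{\min}]$ to coincide with $x_{\min}$, giving $w=x_{\min}$. The periodicity of $w$ I would extract by first transferring the minimality of the $y_n$ to $u_r$ (by continuity of minimal Dirichlet solutions in the boundary data, so that $u_r|_{[a,b]}$ is itself the minimal Dirichlet solution on $[a,b]$ between $\alpha$ and $x_{\min}$ with its own boundary values), and then to $w$ through the shift limits. Both $w$ and $w(\cdot+T)$ would then enjoy this universal minimality property on every compact interval, and a monotonicity comparison between minimal Dirichlet solutions with ordered boundary data should force $w(\cdot+T)\equiv w$.

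The main obstacle is precisely this ``transfer and comparison'' step: ensuring that the minimality property survives both limiting procedures, and then leveraging it against the shift $w(\cdot+T)$ to force periodicity --- this is where the Aubry--Mather flavored non-crossing properties of minimizers enter. The past trajectory $u_l$ is obtained symmetrically, from minimizers on $[-n,0]$ with $y_n(-n)=x_{\min}(-n)$ and $y_n(0)=u_0$.
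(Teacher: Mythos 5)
Your construction of $u_r$ as a decreasing limit of minimal Dirichlet solutions is sound (the monotonicity in the boundary data does follow from the ``minimum of two upper solutions'' argument together with Proposition \ref{prop}), and your treatment of the Modification Lemma is fine. The gap is exactly where you yourself locate it: the asymptotics. Two specific steps do not hold as stated. First, ``continuity of minimal Dirichlet solutions in the boundary data'' is not available: the map from the boundary data to the minimal solution is monotone but in general only one-sidedly semicontinuous, and a decreasing limit of minimal solutions need not be the minimal solution for the limiting data (it is only $\geq$ it); so the universal minimality of $u_r|_{[a,b]}$, and a fortiori of the double limit $w$, is not established. Second, even granting that $w$ and $w(\cdot+T)$ are both universally minimal, the non-crossing property only tells you that $w-w(\cdot+T)$ cannot vanish at two points without the functions coinciding in between; it does not force $w(\cdot+T)\equiv w$, so the periodicity of $w$ does not follow.

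The paper avoids both problems by a different reduction, and this is the idea your proposal is missing. It first treats only the case $u_0=\alpha(0)$, using the \emph{maximal} solutions $y_n$ of the Dirichlet problems $y(0)=\alpha(0)$, $y(nT)=x_{\min}(nT)$: because the left endpoint is anchored at the periodic lower solution, the shifted function $y_n(\cdot-T)$ satisfies $y_n(0)=\alpha(0)=\alpha(T)\leq y_n(T)$ at $t=T$ and $y_n((n-1)T)\leq x_{\min}(nT)=y_n(nT)$ at $t=nT$, so maximality plus Proposition \ref{prop} yields the shift monotonicity $y_n(t-T)\leq y_n(t)$. This inequality survives the limit, so $u(\cdot+nT)$ is an increasing bounded sequence whose limit is a periodic solution between $\alpha$ and $x_{\min}$, necessarily $x_{\min}$ --- no transfer of minimality is needed. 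For a general $u_0\in(\alpha(0),x_{\min}(0))$ the paper then manufactures a \emph{new lower solution} $\hat\alpha$ with $\hat\alpha(0)=u_0$: take a solution $y$ of the Dirichlet problem $y(0)=y(T)=u_0$ between $\alpha$ and $x_{\min}$; the minimality of $x_{\min}$ forces $\dot y(0)>\dot y(T)$ (equality would make the $T$-periodic extension a periodic solution below $x_{\min}$, and the reverse inequality would make it an upper solution, again contradicting minimality), so the periodic extension is a lower solution and Step 1 applies to the pair $\hat\alpha\leq x_{\min}$. With this two-stage reduction the ``transfer and comparison'' machinery becomes unnecessary.
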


\noindent {\bf Remark:} We recall that the (global) {\em stable manifold}  of the periodic solution $x$ is the set $W^S(x)$ of initial conditions of solutions $u_r$ which are asymptotic to $x$ in the future:
$$W^S(x):=\Big\{\big(u_r(0),\dot u_r(0)\big)\text{ such that the solution }u_r:[0,+\infty)\to\mathbb R\text{ of }(\ref{eu0})\text{ satisfies }(\ref{eu17})\Big\}\,.$$
Similarly, the {\em unstable manifold}  associated to $x$ is the set $W^U(x)$ of initial conditions of solutions $u_l$ which are asymptotic to $x$ in the past. These sets allow us to reformulate part of the information given by Theorem \ref{th1} as follows:

\vspace{0.3cm}

\noindent{\bf Theorem \ref{th1}$.^*$}{\em Assume {\bf [N]}. Assume also that $\alpha$ is not a solution of $(P)$, and let $x_{\min}$ be the minimal solution of $(P)$ lying between $\alpha$ and $\beta$. Then, for any initial position $u_0\in[\alpha(0),x_{\min}(0))$ there are initial velocities $v_l,v_r\in\mathbb R$ such that
$$(u_0,v_l)\in W^U(x_{\min}),\hspace{2cm}(u_0,v_r)\in W^S(x_{\min})\,.$$ }
 If $f$ satisfies some smoothness assumptions and the periodic solution $x$ is hyperbolic, then the Stable manifold Theorem states that $W^U(x)$ and $W^S(x)$ are injectively immersed smooth curves, see e.g. \cite{pal-mel}, Theorem 6.2. However, this result does not always apply to $x_{\min}$ under the assumptions of Theorem \ref{th1}, because $x_{\min}$ may not be hyperbolic. Indeed, one may construct a periodic in time, $\mathcal C^{0,2}$ equation $-\ddot x=f(t,x)$ which is {\em repulsive} in the sense that
\begin{equation}\label{eu18}
f(t,0)=0\,,\hspace{2cm}f(t,-x) > 0 > f(t, x)\ \forall x>0\,,
\end{equation}
and such that the stable manifold $W^S(x_*)$ associated to the periodic solution $x_*\equiv 0$ is pathological\footnote{Precisely, it may be neither arcwise connected nor locally connected, and may contain points which are not accessible.}, see \cite{ure2}. Observe that (\ref{eu18}) implies that every negative constant is a lower solution of the associated periodic problem, every positive constant is an upper solution, and $x_*\equiv 0$ is the only periodic solution of the equation.

\begin{proof}[Proof of Theorem \ref{th1}] After replacing $t$ with $-t$ one observes that it suffices to prove the part of the statement concerning the solutions $u_r$ which are asymptotic to $x_{\min}$ in the future. The proof will be divided into two steps. Notice that Step 1 and Phase 2(a) assume the boundedness of $f$ between the lower and the upper solutions.

\vspace{0.2cm}

     {\bf (Step 1)} {\em Let $f$ be bounded between $\alpha$ and $\beta$. Then, there exists some solution $u:[0,+\infty)\to\mathbb R$ which is asymptotic to $x_{\min}$ in the future and
     at initial time $t=0$ starts from the position $u_0=\alpha(0)$.}

\vspace{0.2cm}

To show this Step we consider, for each $n\in\mathbb N$, the Dirichlet
problem

\begin{equation*}
 (D_n)\equiv\begin{cases}
-\ddot y=f(t,y,\dot y)\,,\vspace{0.3cm}\\  y(0)=\alpha(0)\,,\ \
y(nT)=x_{\min}(nT)\,.
\end{cases} \end{equation*}

Observe that $\alpha$ is a lower solution for this problem, while
$x_{\min}$ is an upper solution. Furthermore $\alpha\leq
x_{\min}$. Then, Proposition \ref{DP} states the existence of some
solution of $(D_n)$ which lies between $\alpha$ and $x_{\min}$. This
solution may not be unique, but we choose the maximal one and call
it $y_n$. In this way, $y_n$ is a $\mathcal C^2$ function on $[0,nT]$. We
organize the proof of Step 1 in four phases:

\vspace{0.3cm}

{\em (Phase 1a)}\hspace{2cm} $y_n(t-T)\leq y_n(t)\,,\hspace{2cm} t\in[T,nT]\,.$

\vspace{0.3cm}

\noindent To check this assertion we use a contradiction argument and assume
instead that the contrary happens for some $t_*\in[T,nT]$. We
denote $\tilde y_n(t):=y_n(t-T)$, which solves our equation on
$[T,nT]$ and verifies:
$$\tilde y_n(T)\leq y_n(T)\,,\hspace{1cm}\tilde
y_n(nT)\leq y_n(nT)\,,\hspace{1cm}\tilde y_n(t_*)>y_n(t_*)\,,$$
and this implies the existence of numbers $T\leq\tilde
a<t_*<\tilde b\leq nT$ such that
$${\tilde y}_n(\tilde a)=y_n(\tilde a)\,,\hspace{2cm}{\tilde y}_n(\tilde b)=y_n(\tilde b)\,,$$
(see Fig. 2(a)). In view of  Proposition \ref{prop}, ${y_n}_{\big|[\tilde a,\tilde b]}$
should be the maximal solution of the Dirichlet problem
\begin{equation*}(\tilde D)\equiv
 \begin{cases}
-\ddot y=f(t,y,\dot y)\,,\vspace{0.3cm}\\  y(\tilde a)=y_n(\tilde a)\,,\ \
y(\tilde b)=y_n(\tilde b)\,.
\end{cases}
\end{equation*}
 But ${\tilde y_n}\hspace{1cm}\hspace{-1cm}_{\big|[\tilde a,\tilde
b]}$ also solves $(\tilde D)$  and at time $t_*$ it is greater
than $y_n$. This contradiction shows Phase {\em 1a}.

\begin{figure*}[!h!]\label{figg2}\begin{center}
\includegraphics[scale=0.7]{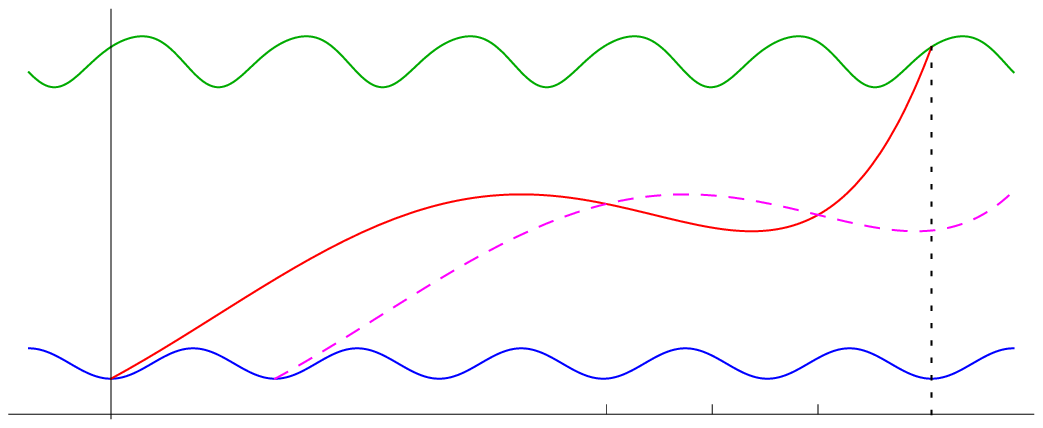}
\put(-18,87){\small $x_{\min}$}\put(-11,21){\small $\alpha$}\put(-133,59){\small $y_n$}\put(-11,53){\small $\tilde y_n$}\put(-35,-1){\scriptsize $nT$}\put(-55,-1){\scriptsize $\tilde b$}\put(-79,-1){\scriptsize $t_*$}\put(-100,-1){\scriptsize $\tilde a$}\put(-200,-1){\scriptsize $0$}\put(-167,-1){\scriptsize $T$}\put(-13,-1){\footnotesize $t\rightarrow$}\put(-205,88){\footnotesize $u$}\put(-205,98){\footnotesize $\uparrow$}\put(-100,-25){\small (a)}\hspace{1cm}  \includegraphics[scale=0.7]{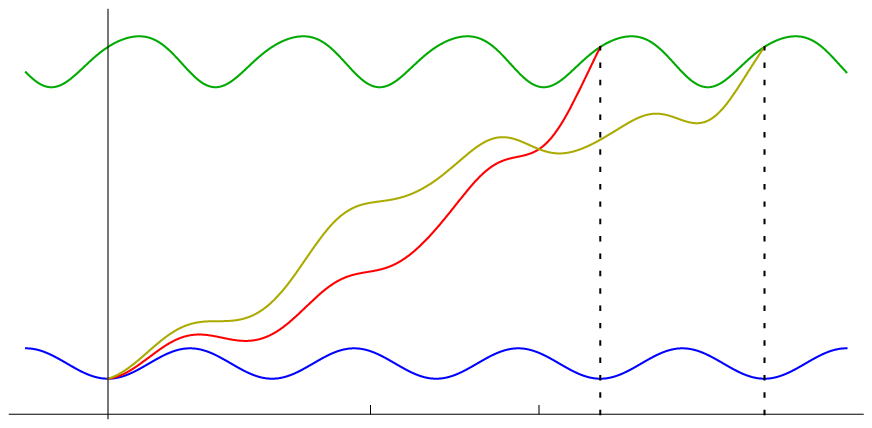}\put(-5,-1){\footnotesize $t\rightarrow$}\put(-18,87){\small $x_{\min}$}\put(-9,21){\small $\alpha$}\put(-44,-1){\scriptsize $(n$}\put(-36,-1){\scriptsize $+$}\put(-29,-1){\scriptsize $1)T$}\put(-65,-1){\scriptsize $nT$}\put(-164,-1){\scriptsize $0$}\put(-77,-1){\scriptsize $\tilde b$}\put(-110,-1){\scriptsize $t_*$} \put(-169,88){\footnotesize $u$}\put(-169,98){\footnotesize $\uparrow$}\put(-98,35){\small $y_n$}\put(-119,59){\small $y_{n+1}$}\put(-100,-25){\small (b)} \end{center}\caption{(a) The solutions $y_n$ must increase on time intervals of length $T$. (b) If $y_{n+1}$ is greater than $y_n$ at some point $t_*$, then  $y_n$ is not maximal on the interval $[0,\tilde b]$.}
\end{figure*}

\vspace{0.3cm}

 {\em (Phase 1b)}\hspace{2cm}$y_n\geq y_{n+1}\ \text{ on }[0,nT]\,.$

\vspace{0.3cm}

The proof of this new assertion uses similar ideas to the previous one. Since,
should it fail to be true, we could find some time $t_*\in[0,nT]$ with
$y_n(t_*)<y_{n+1}(t_*)$. But $y_n(nT)=x_{\min}(nT)\geq y_{n+1}(nT)$,
and we deduce the existence of some $\tilde b\in(t_*,nT]$ such that
$y_n(\tilde b)=y_{n+1}(\tilde b)$, see Fig. 2(b).

Since furthermore $y_n(0)=\alpha(0)=y_{n+1}(0)$, Proposition \ref{prop} states that both ${y_n}_{\big|[0,\tilde b]}$ and
${y_{n+1}}_{\big|[0,\tilde b]}$ should be the maximal
solution of the Dirichlet problem
\begin{equation*}(\tilde D)\equiv
 \begin{cases}
-\ddot y=f(t,y,\dot y)\,,\vspace{0.3cm}\\  y(0)=\alpha(0)\,,\ \
y(\tilde b)=y_n(\tilde b)\,.
\end{cases}
\end{equation*}
 Hence, these
solutions must be constantly equal, contradicting the existence of
$t_*$.

\vspace{0.3cm}

{\em (Phase 1c)}\hspace{2cm}There exists some solution $u:[0,+\infty)\to\mathbb
R$ of (\ref{eu0}) with
\begin{equation}\label{eu7}
u(0)=\alpha(0)\,,\hspace{0.5cm}\alpha\leq u\leq x_{\min}\ \text{ on
}[0,+\infty)\,,\hspace{0.5cm}u(t)\leq u(t+T)\ \forall
t\in[0,+\infty)\,.
\end{equation}
To see this, we apply Proposition \ref{prp1} to the decreasing
sequence $u_n:=y_n$, and obtain that it converges in the $\mathcal
C^2$ topology on compact sets to some solution
$u:[0,+\infty)\to\mathbb R$ of (\ref{eu0}), see Fig. 3(a). Now, since all functions
$y_n$ lie between $\alpha$ and $x_{\min}$ and start from $\alpha(0)$
at time $t=0$, the same happens with $u$. The last assertion of
(\ref{eu7}) is a direct consequence of Phase {\em 1a}.

\begin{figure*}[!h!]\label{fig3}
\begin{center}
\includegraphics[scale=0.7]{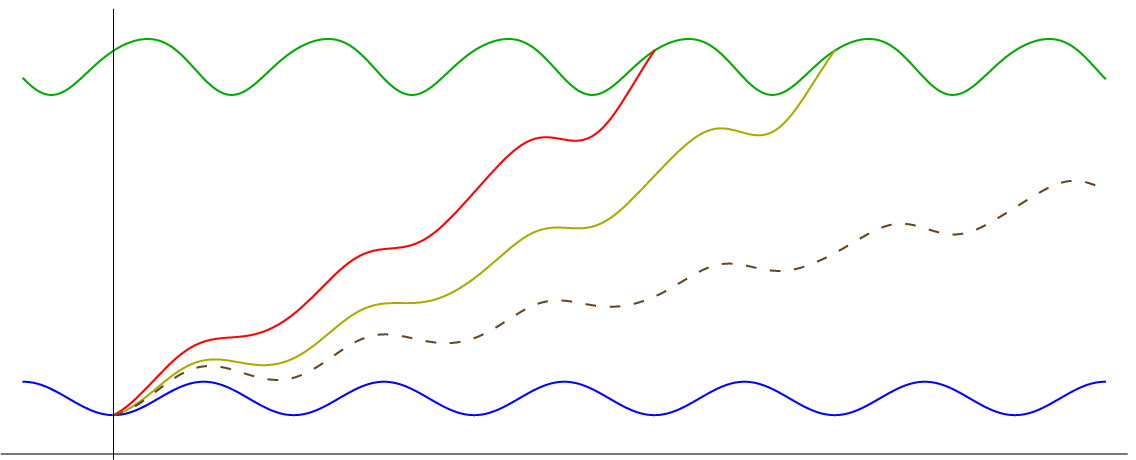}
\put(-18,89){\small $x_{\min}$}\put(-11,19){\small $\alpha$}\put(-150,50){\small $y_n$}\put(-88,57){\small $y_{n+1}$}\put(-208,-7){\scriptsize $0$}\put(-13,-7){\footnotesize $t\rightarrow$}\put(-213,86){\footnotesize $u$}\put(-213,96){\footnotesize $\uparrow$}\put(-11,48){\small $u$}\put(-100,-25){\small (a)}\hspace{1cm}  \includegraphics[scale=0.7]{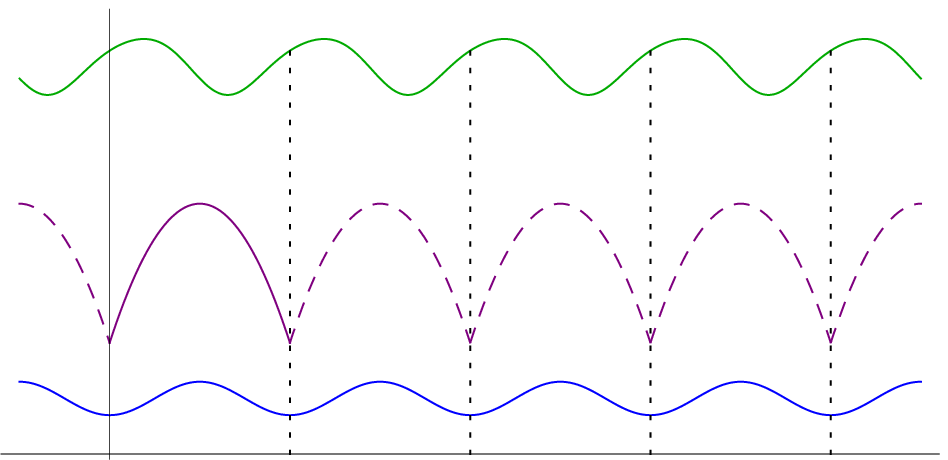}   \put(-135,-7){\scriptsize $T$}\put(-153,58){\small $y$} \put(-11,56){\small $\hat\alpha$}\put(-63,-7){\scriptsize $3T$}\put(-99,-7){\scriptsize $2T$}\put(-13,-7){\footnotesize $t\rightarrow$}\put(-18,89){\small $x_{\min}$}\put(-11,19){\small $\alpha$}\put(-171,-7){\scriptsize $0$}\put(-100,-25){\small (b)}\put(-180,21){\footnotesize $u_0$}\put(-176,86){\footnotesize $u$}\put(-176,96){\footnotesize $\uparrow$}
\end{center}\caption{(a) The decreasing sequence $y_n$ has some limit $u$. (b) The $T$-periodic extension of $y$ is a new lower solution $\hat\alpha$.}
\end{figure*}

\vspace{0.3cm}

{\em (Phase 1d)}\hspace{2cm}$u$ is asymptotic to $x_*$ in the
future.

\vspace{0.3cm}

To see this we consider the sequence of functions
$$u_n(t)=u(t+nT)\,,\hspace{2cm}t\in[0,T]\,,$$ which is uniformly
bounded and pointwise increasing, and made of solutions of
(\ref{eu0}). We use again Proposition \ref{prp1} to deduce that
$\{u_n\}$ converges in the $\mathcal C^2[0,T]$ topology to some
solution $u_*:[0,T]\to\mathbb R$ of (\ref{eu0}). This solution must
clearly be $T-$periodic and lie between $\alpha$ and $x_{\min}$. But
$x_{\min}$ was minimal, meaning that $u_*=x_{\min}$. The result
follows.

\vspace{0.2cm}

     {\bf (Step 2)} {From a particular case to Theorem \ref{th1} in its full generality}

\vspace{0.2cm}

    The first step was devoted to show Theorem \ref{th1} for $u_0=\alpha(0)$ and assuming that $f$ is bounded between $\alpha$ and $\beta$. We successively remove these restrictions in the phases below:

\vspace{0.3cm}

{\em (Phase 2a)}\ \ \ Let $f$ be bounded between $\alpha$ and $\beta$. Then, for any $u_0\in(\alpha(0),x_{\min}(0))$ there exists some lower
     solution $\hat\alpha$ of $(P)$ with $\alpha\leq\hat\alpha\leq
     x_{\min}$ and $\hat\alpha(0)=u_0$.

\vspace{0.3cm}

    Fix some $u_0\in(\alpha(0),x_{\min}(0))$ and consider the
     Dirichlet problem

     \begin{equation*}
 (D_{0})\equiv\begin{cases}
-\ddot y=f(t,y,\dot y)\,,\vspace{0.3cm}\\  y(0)=u_0=y(T)\,.
\end{cases}
\end{equation*}

For this problem, $\alpha$ is a lower solution, while $x_{\min}$ is
an upper solution. Thus, Proposition \ref{DP} states the existence
of some solution $y:[0,T]\to\mathbb R$ of $(D_{0})$ which lies
between $\alpha$ and $x_{\min}$.

We claim that $\dot y(0)>\dot y(T)$. Indeed, $\dot y(0)\not=\dot y(T)$ since otherwise
the $T$-periodic extension $\hat\alpha$ of $y$ would be a
solution of $(P)$, contradicting the minimality of $x_{\min}$. And
if $\dot y(0)<\dot y(T)$ then $\hat\alpha$ would be an upper solution
of $(P)$, implying the existence of some periodic solution between
$\alpha$ and $\hat\alpha$ and contradicting again the minimality of
$x_{\min}$. Thus, $\dot y(0)>\dot y(T)$ as claimed, and this means
that $\hat\alpha$ is a lower solution of $(P)$, see Fig. 3(b).

Step 1 now implies that for every $u_0\in[\alpha(0),x_{\min}(0))$ there exists some solution $u_r:[0,+\infty)\to\mathbb R$ with $u_r(0)=u_0$ and $\alpha\leq u_r\leq x_{\min}$ on $[0,+\infty)$ which is asymptotic to $x_{\min}$ in the future. But all this work was done assuming $f$ is bounded between $\alpha$ and $\beta$. The last part of the proof is devoted to see that {\bf [N]} is sufficient.

\vspace{0.3cm}

{\em (Phase 2b)}\ \ \ From boundedness to the Nagumo condition.

\vspace{0.3cm}

 Let now $f$ satisfy {\bf [N]}. We use the Modification Lemma \ref{ML} and transform equation (\ref{eu0}) accordingly. In view of the comments following the statement of the Lemma, the original and the modified equations have the same $T$-periodic solutions between $\alpha$ and $\beta$, and $x_{\min}$ continues to be the minimal one of them. The discussions before show that from each initial position $u_0\in[\alpha(0),x_{\min}(0))$ there starts some solution $u_r$ of our modified equation which lies between $\alpha$ and $\beta$ and is asymptotic to $x_{\min}$ in the future. But {\em (iii)} implies that they are indeed solutions of the original equation. The  proof is complete.

  \end{proof}
 The result which closes this Section is a straightforward consequence of Theorem \ref{th1} and Proposition \ref{prop1}, and states the instability of the maximal and the minimal solutions produced by the periodic method of lower and upper solutions. Since the Nagumo condition  {\bf [N]} always holds in the conservative case, it may be seen as a generalization of Proposition 3.1 of \cite{dan-ort}:
 \begin{corollary}\label{cor1}{Assume {\bf [N]} and uniqueness for initial value problems associated to (\ref{eu0}). Assume also that $\alpha$ (resp. $\beta$) is {\bf\em not} a solution of this equation. Then, $x_{\min}$ (resp. $x_{\max}$) is unstable, simultaneously in the past and in the future.}
 \begin{proof} Assume, for instance, that $\alpha$ is not a solution of (\ref{eu0}). Then, Proposition \ref{prop1}
 implies that $x_{\min}$, being a (non-strict) upper solution of $(P)$, verifies $\alpha(0)<x_{\min}(0)$. For every initial position $u_0\in\big[\alpha(0),x_{min}(0)\big)$, Theorem \ref{th1} states the existence of solutions
  $$u_l:(-\infty,0]\to\mathbb R,\hspace{2cm}  u_r:[0,+\infty)\to\mathbb R\,,$$
   which are asymptotic to $x_{\min}$ in the past and in the future respectively and verify (\ref{eu8}). By uniqueness of initial value problems,  $u_l<x_{min}$ and $u_r<x_{\min}$ on their respective domains. The result follows.

  \end{proof}
  \end{corollary}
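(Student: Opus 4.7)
The plan is to combine Theorem \ref{th1} with Proposition \ref{prop1} and the uniqueness hypothesis. By the obvious symmetry I only treat the case in which $\alpha$ is not a solution of (\ref{eu0}); the statement about $x_{\max}$ is analogous. The first step is to ensure that Theorem \ref{th1} is not vacuous here: since $x_{\min}$ is itself a $T$-periodic solution (and hence, in particular, a non-strict upper solution of $(P)$) and is distinct from $\alpha$, Proposition \ref{prop1} applied to the pair $\alpha \leq x_{\min}$ forces $\alpha(t) < x_{\min}(t)$ for every $t \in \mathbb R$. In particular the interval $[\alpha(0), x_{\min}(0))$ is non-degenerate.

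For every $u_0$ in this interval Theorem \ref{th1} produces solutions $u_l:(-\infty,0]\to\mathbb R$ and $u_r:[0,+\infty)\to\mathbb R$ with $u_l(0)=u_r(0)=u_0$, both squeezed between $\alpha$ and $x_{\min}$ and asymptotic to $x_{\min}$ in the past and in the future respectively. Because $u_l(0)=u_0 \neq x_{\min}(0)$, the uniqueness assumption for initial value problems prevents $u_l$ from ever meeting $x_{\min}$; combined with the inequality $u_l \leq x_{\min}$ coming from Theorem \ref{th1}, this gives the strict bound $u_l(t) < x_{\min}(t)$ throughout $(-\infty, 0]$. The same reasoning produces $u_r(t) < x_{\min}(t)$ throughout $[0,+\infty)$.

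The last step is to turn these genuinely nontrivial asymptotic branches into Lyapunov instability in both time directions. Fix $u_0$ and set $\varepsilon := x_{\min}(0) - u_0 > 0$. Given $\delta>0$, the past-asymptoticity of $u_l$ and the $T$-periodicity of $x_{\min}$ allow us to choose $n \in \mathbb N$ so large that $|u_l(-nT) - x_{\min}(0)| + |\dot u_l(-nT) - \dot x_{\min}(0)| < \delta$. The $T$-periodicity of the equation makes the time-shift $v(t) := u_l(t - nT)$ a new solution of (\ref{eu0}); its initial datum at $t=0$ lies within $\delta$ of that of $x_{\min}$, yet $v(nT) = u_l(0) = u_0$ stays at distance $\varepsilon$ from $x_{\min}(nT)=x_{\min}(0)$. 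This exhibits future-instability of $x_{\min}$, and the symmetric construction with $u_r$ in place of $u_l$ (shifting by $+nT$ instead of $-nT$) yields past-instability. The only subtle point in the plan is the appeal to uniqueness: without it, $u_l$ or $u_r$ could merge with $x_{\min}$ after finite time and hence say nothing about Lyapunov stability, so this hypothesis is what keeps the asymptotic trajectories separated from $x_{\min}$ at the reference time and makes the time-translation argument work.
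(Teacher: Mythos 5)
Your proposal is correct and follows essentially the same route as the paper: Proposition~\ref{prop1} to get $\alpha(0)<x_{\min}(0)$, Theorem~\ref{th1} to produce the asymptotic branches, and uniqueness (via the tangency argument) to keep them strictly below $x_{\min}$. The only difference is that you spell out the final time-translation argument converting the past-asymptotic branch into future-instability and vice versa, a step the paper compresses into ``the result follows''; this elaboration is accurate.
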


  \section{The dynamics between two ordered solutions}\label{sec4}

In this Section $\alpha$ and $\beta$ are assumed to be {\em solutions} of the periodic problem $(P)$. In general this does not imply the existence of an unstable periodic solution between them, at least if by `instability' we mean Lyapunov instability {\em in the past  and in the future}. To check this assertion it suffices to consider the linear equation $-\ddot u=\dot u$. Observe that the periodic solutions are the constants, and all of them are unstable in the past, but stable in the future. We also notice that, at least for this example, the periodic solutions are always ordered and make up a continuous family.

Thus, we shall begin by setting ourselves under the opposite situations of $\alpha$ and $\beta$ being {\em neighboring} solutions of $(P)$; i.e., the only solutions $x$ of the periodic problem $(P)$ lying between $\alpha$ and $\beta$ are $x\equiv\alpha$ and $x\equiv\beta$. Neighboring solutions have previously been considered in
the literature, including for instance in the Aubry-Mather theory, which uses variational arguments to show the existence of heteroclinics between neighboring global periodic minimals (see, e.g. Theorem 2.6.2 of \cite{mos}). The periodic problem $(P)$ which occupies us now may not have a variational structure, and assuming that it has one, the neighboring solutions $\alpha\leq \beta$ may not be minimal. Indeed, one easily finds examples of neighboring periodic solutions which are not connected by heteroclinics (think for instance on two consecutive equilibria of some autonomous equation $-\ddot x=V'(x)$ lying at different potential levels). However, in this Section we shall see that one of the neighboring periodic solutions must be the limit of many asymptotic solutions, simultaneously in the past and in the future. Precisely:
\begin{theorem}\label{th2}{Let $\alpha\leq\beta$ be neighboring solutions of $(P)$ and assume {\bf [N]}.  Then, either $x\equiv\alpha$ or $x\equiv\beta$ has the following property: for any initial position $$u_0\in\Big(\alpha(0),\beta(0)\Big)\,,$$
there are solutions $u_l:(-\infty,0]\to\mathbb R,\ \
u_r:[0,+\infty)\to\mathbb R$ of (\ref{eu0}) with\
$$u_l(0)=u_0=u_r(0)\,,\hspace{1.5cm}\alpha\leq u_l\leq\beta\ \text{ on }(-\infty,0]\,,\hspace{1.5cm}\alpha\leq u_r\leq\beta\ \text{ on }[0,+\infty)\,,$$
which are asymptotic to $x$ in the past and in the future
respectively. }

 \begin{proof}

 We consider the set $\Sigma$ of solutions of (\ref{eu0}) on the time interval $[0,T]$ which start and end at the same position (with possibly different derivative) and lie between $\alpha$ and $\beta$:
  $$\Sigma:=\left\{u\in \mathcal C^2[0,T]\left|\begin{matrix}
\  u\text{ solves }(\ref{eu0})\\ \vspace{-0.4cm}\\
 \ u(0)=u(T)\\ \vspace{-0.4cm} \\
 \ \alpha\leq u\leq\beta\text{ on }[0,T]
 \end{matrix}\right.\right\}\,.$$

Observe that this set contains the solutions $\alpha$ and $\beta$. Since they are assumed to be neighboring, other solutions $u\in\Sigma$ must verify $\dot u(0)\not=\dot u(T)$. We start our proof by showing that $\Sigma$, which is  naturally endowed with $\mathcal C^2[0,T]$ topology, contains nontrivial connected subsets:

\vspace{0.2cm}

    \begin{claim}{\em For each $0<\epsilon<\frac{1}{2}(\beta(0)-\alpha(0))$ there exists a continuum $\Sigma_\epsilon\subset\Sigma$ such that
     $$\Big\{u(0):u\in\Sigma_\epsilon\Big\}=\Big[\alpha(0)+\epsilon,\beta(0)-\epsilon\Big]\,.$$

     }   \end{claim}
 To show the Claim we first replace our equation $(\ref{eu0})$ with a                                                                            modified one $(\tilde 1)$ as in Lemma \ref{ML}. From {\em (ii)} and {\em (iii)} we see that
  $$\Sigma=\left\{u\in \mathcal C^2[0,T]\left|\begin{matrix}
  u\text{ solves }(\tilde 1)\\ \vspace{-0.4cm}\\ u(0)=u(T)\in[\alpha(0),\beta(0)]
  \end{matrix}\right.\right\}\,.$$

   By the comments following the proof of Lemma \ref{ML} in the Appendix we see that $(\tilde 1)$ may actually be rewritten in the form (\ref{eu15}) for some globally bounded function $b$. Thus, for each $u_0\in[\alpha(0),\beta(0)]$, the solutions $u:[0,T]\to\mathbb R$ of (\ref{eu0}) which lie between $\alpha$ and
 $\beta$ and verify $u(0)=u(T)=u_0$ may be written as the fixed points of a compact, bounded operator $\mathcal F_{u_0}$ on the space of $\mathcal C^2[0,T]$ functions vanishing at times $t=0,T$. The family $\big\{\mathcal F_{u_0}\big\}_{u_0}$ is uniformly bounded and depends continuously on $u_0$, and the result then follows from the usual continuation properties of the Leray-Schauder topological degree.

           \vspace{0.1cm}

Having shown the Claim, let us complete the proof of the Theorem. We choose some sequence $\epsilon_n\searrow 0$ with $0<\epsilon_n<\big(\beta(0)-\alpha(0)\big)/2$ for every $n$. As observed before,  the associated connected sets $\Sigma_{\epsilon_n}$ do not contain periodic solutions, and we deduce that
for each $n$, either
\begin{enumerate}
\item[$(p_n^-)$] All solutions $u\in\Sigma_{\epsilon_n}$ verify that $\dot u(0)>\dot u(T)$, or
\item[$(p_n^+)$] All solutions $u\in\Sigma_{\epsilon_n}$ verify that $\dot u(0)<\dot u(T)$.
\end{enumerate}
 Observe that if $(p_n^-)$ holds for some $n$ then the elements of $\Sigma_{\epsilon_n}$ when extended to the real line by periodicity become lower solutions of $(P)$. On the contrary, if $(p_n^+)$ holds for some $n$ then the $T-$periodic extensions of the elements of $\Sigma_{\epsilon_n}$ make up upper solutions of $(P)$. We deduce that if there are infinitely many numbers $n$ for which $(p_n^-)$ holds, then problem $(P)$ has some lower solution starting from each initial position $\alpha_0\in(\alpha(0),\beta(0))$; moreover,this lower solution lies between $\alpha$ and $\beta$. On the contrary, if there are infinitely many numbers $n$ for which $(p_n^+)$ holds, then problem $(P)$ has some upper solution starting from each initial position $\beta_0\in(\alpha(0),\beta(0))$, and this upper solution lies between $\alpha$ and $\beta$. The result follows now from Theorem \ref{th1}.

  \end{proof}
\end{theorem}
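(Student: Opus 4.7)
The plan is to reduce to Theorem \ref{th1} by constructing new ``inner'' lower solutions $\hat\alpha$ sitting strictly above $\alpha$ at $t=0$, or dually upper solutions $\hat\beta$ sitting strictly below $\beta$ at $t=0$, at each interior initial position $u_0\in(\alpha(0),\beta(0))$. Indeed, if such a $\hat\alpha$ exists with $\alpha\le\hat\alpha\le\beta$ and $\hat\alpha(0)=u_0$, the neighboring hypothesis forces the minimal periodic solution between $\hat\alpha$ and $\beta$ to be $\beta$ itself (any other periodic solution between them would also lie between $\alpha$ and $\beta$); since $\hat\alpha$ will not actually solve (\ref{eu0}), Theorem \ref{th1} applied to the pair $(\hat\alpha,\beta)$ yields the required past- and future-asymptotic trajectories to $\beta$ through $u_0$. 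The dual construction produces asymptotics to $\alpha$.

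To manufacture $\hat\alpha$ or $\hat\beta$ at a given $u_0$, I would first use Proposition \ref{DP} on the Dirichlet problem $y(0)=u_0=y(T)$, producing a solution $y\colon[0,T]\to\mathbb R$ with $\alpha\le y\le\beta$. Its $T$-periodic extension is a lower solution of $(P)$ when $\dot y(0)>\dot y(T)$, an upper solution when $\dot y(0)<\dot y(T)$, and a genuine periodic solution when $\dot y(0)=\dot y(T)$. The third case is ruled out by the neighboring assumption, since $u_0\ne\alpha(0),\beta(0)$. The delicate point is that a priori the sign of $\dot y(0)-\dot y(T)$ could vary with $u_0$, while the conclusion of the theorem demands a single uniform choice (either $x\equiv\alpha$ or $x\equiv\beta$) that works for all interior $u_0$.

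To secure a uniform sign I would use a connectedness argument. Consider the set $\Sigma$ of $\mathcal C^2$ solutions of (\ref{eu0}) on $[0,T]$ with $u(0)=u(T)$ and $\alpha\le u\le\beta$. After modifying the nonlinearity via Lemma \ref{ML} so that $f$ becomes bounded between $\alpha$ and $\beta$ (this preserves $\Sigma$ by \emph{(ii)}--\emph{(iii)}, and in particular preserves the neighboring property), the problem $y(0)=u_0=y(T)$ becomes a fixed-point equation $y=\mathcal F_{u_0}(y)$ for a compact operator $\mathcal F_{u_0}$ depending continuously on $u_0$. The continuation property of the Leray--Schauder degree should then produce, for each small $\epsilon>0$, a continuum $\Sigma_\epsilon\subset\Sigma$ whose image under $u\mapsto u(0)$ exhausts $[\alpha(0)+\epsilon,\beta(0)-\epsilon]$. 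On $\Sigma_\epsilon$ the continuous functional $u\mapsto\dot u(0)-\dot u(T)$ never vanishes (by neighboring), hence has constant sign.

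A pigeonhole argument over a sequence $\epsilon_n\searrow 0$ then forces one sign to persist for infinitely many $n$: either every interior $u_0$ is the starting value of some element of $\Sigma_{\epsilon_n}$ with $\dot u(0)>\dot u(T)$, giving lower solutions $\hat\alpha$ of $(P)$ throughout $(\alpha(0),\beta(0))$ and yielding the conclusion with $x\equiv\beta$ via Theorem \ref{th1}; or the opposite sign prevails, producing upper solutions $\hat\beta$ and the analogous conclusion with $x\equiv\alpha$. I expect the main technical obstacle to be the construction of the continua $\Sigma_\epsilon$ with the correct projection onto initial data: this is where the compact-operator reformulation and the continuation of Leray--Schauder degree do the real work, and the rest of the argument is essentially a bookkeeping reduction to Theorem \ref{th1}.
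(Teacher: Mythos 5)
Your proposal is correct and follows essentially the same route as the paper: the set $\Sigma$ of solutions with $u(0)=u(T)$ between $\alpha$ and $\beta$, the continua $\Sigma_\epsilon$ obtained via the Modification Lemma and Leray--Schauder continuation, the constant sign of $\dot u(0)-\dot u(T)$ on each continuum by connectedness and the neighboring hypothesis, the pigeonhole over $\epsilon_n\searrow 0$, and the final reduction to Theorem \ref{th1}. Your explicit verification that the minimal periodic solution between $\hat\alpha$ and $\beta$ must be $\beta$ is a useful spelling-out of the paper's closing sentence, but it is not a different argument.
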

An immediate consequence of Theorem \ref{th2}  is given below:
\begin{corollary}\label{cor2}{Assume {\bf [N]} and uniqueness of initial value problems. If $\alpha<\beta$ are neighboring solutions of the periodic problem $(P)$, one of them must be unstable, simultaneously in the past and in the future.}
\end{corollary}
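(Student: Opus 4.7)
The plan is to mirror the derivation of Corollary~\ref{cor1}: feed the conclusion of Theorem~\ref{th2} into the uniqueness hypothesis and extract Lyapunov instability from the global branches of asymptotic trajectories that Theorem~\ref{th2} delivers.

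Theorem~\ref{th2} selects one of $\alpha,\beta$---say $\alpha$, the other case being symmetric---such that for every $u_0\in(\alpha(0),\beta(0))$ there exist solutions $u_l:(-\infty,0]\to\mathbb R$ and $u_r:[0,+\infty)\to\mathbb R$ of (\ref{eu0}) sandwiched between $\alpha$ and $\beta$, agreeing with $u_0$ at $t=0$, and asymptotic to $\alpha$ respectively in the past and in the future. Fixing any such $u_0$, I would use uniqueness of initial value problems combined with $u_l\geq\alpha$ and $u_l(0)=u_0>\alpha(0)$ to rule out any interior coincidence between $u_l$ and $\alpha$: touching at an interior point would force matching derivatives (local minimum of $u_l-\alpha$) and hence $u_l\equiv\alpha$ by uniqueness, contradicting $u_l(0)>\alpha(0)$. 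Thus $u_l>\alpha$ on $(-\infty,0]$, and symmetrically $u_r>\alpha$ on $[0,+\infty)$.

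The final step exploits the $T$-periodicity of $f$ in time via a translation trick. Given the past-asymptotic branch $u_l$, set $v_n(t):=u_l(t-nT)$; each $v_n$ still solves (\ref{eu0}). Because $u_l$ is asymptotic to the $T$-periodic solution $\alpha$ at $-\infty$, the data $(v_n(0),\dot v_n(0))=(u_l(-nT),\dot u_l(-nT))$ converge to $(\alpha(0),\dot\alpha(0))$, while $v_n(nT)=u_l(0)=u_0$ stays at the fixed positive distance $|u_0-\alpha(0)|$ from $\alpha(nT)=\alpha(0)$; this produces solutions starting arbitrarily close to $\alpha$ at $t=0$ which separate from $\alpha$ by a fixed amount at later times $t=nT>0$, contradicting stability of $\alpha$ in the future. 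The entirely symmetric construction $w_n(t):=u_r(t+nT)$ using the forward-asymptotic branch $u_r$ yields instability in the past.

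I anticipate no real obstacle: Theorem~\ref{th2} has already done the hard work of producing the asymptotic branches, and the remaining argument is the classical observation that a past-asymptotic (resp.\ forward-asymptotic) non-trivial orbit to a $T$-periodic solution implies future (resp.\ past) Lyapunov instability via time-$T$ translates of the orbit.
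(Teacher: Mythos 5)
Your proposal is correct and follows essentially the route the paper intends: the paper states Corollary \ref{cor2} as an immediate consequence of Theorem \ref{th2}, with the instability extracted exactly as in the proof of Corollary \ref{cor1}, i.e., a nontrivial branch asymptotic in the past (resp.\ future) yields, via the time-$T$ translates of that orbit, instability in the future (resp.\ past). Your additional strictness step $u_l>\alpha$, $u_r>\alpha$ is harmless but not even needed, since $u_0\in(\alpha(0),\beta(0))$ already guarantees the branches are distinct from the periodic solution.
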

We observe that, even though the method of lower and upper solutions also works for first order equations,
this result does not have an immediate extension there. For instance, $\alpha\equiv 0$ and $\beta\equiv 1$ are neighboring periodic solutions for $-\dot u=u(1-u)$, but $\alpha$ is stable in the future and $\beta$ is stable in the past. We owe this remark to R. Ortega.

 At this moment we come back to the question formulated at the beginning of this Section: given ordered solutions $\alpha<\beta$ of the periodic (second-order) problem $(P)$, is it possible to find an unstable solution between them? We already know that the answer is `yes' if $\alpha$ and $\beta$ are neighboring and `no' in general. In the spirit of the counterexample mentioned before we shall say that the periodic problem $(P)$ is  {\em degenerate} between $\alpha$ and $\beta$ if they may be linked by an increasing path of periodic solutions. Precisely, if there exists some $\mathcal C^{2,0}$ function $$\Psi:\mathbb R\times[0,1]\to\mathbb R\,,\hspace{1.5cm}(t,s)\mapsto\Psi(t,s)\,,$$ such that
\begin{enumerate}
\item[{\em (a)}]$\Psi(\cdot,0)=\alpha(\cdot),\ \ \Psi(\cdot,1)=\beta(\cdot)\,.$
\item[{\em (b)}]$\Psi(\cdot,s)$ is a solution of the periodic problem $(P)$ for any $s\in[0,1]$.
\item[{\em (c)}]$\Psi(t,s_1)<\Psi(t,s_2)$ if $s_1<s_2$.
\end{enumerate}

    This concept of degeneracy will be used to give a more precise answer to the question which opened this paragraph. We remark that the study of degeneracy is not new, for instance a related notion of degeneracy was considered in  \cite{ort-tar}.
\begin{proposition}\label{prrop}{Assume uniqueness for initial value problems and {\bf [N]}}. If $\alpha<\beta$ are ordered solutions of $(P)$, one of the following hold:
\begin{enumerate}
\item[($I$)] There exists some solution $x$ of the periodic problem $(P)$ which is unstable in the past and in the future and lies between $\alpha$ and $\beta$.

\item[($I\hspace{-0.1cm}I$)] $(P)$ is degenerate between $\alpha$ and $\beta$.
\end{enumerate}
  \begin{proof}
 We distinguish two possibilities; either $\alpha$ and $\beta$ enclose some ordered pair $\alpha\leq x_1<x_2\leq\beta$
  of neighboring periodic solutions, or not. In the first case Corollary \ref{cor2} immediately implies $(I)$; in the second case we are going to show $(I\hspace{-0.1cm}I)$.

  Thus, assume that between $\alpha$ and $\beta$ there are not pairs $x_1<x_2$ of neighboring periodic solutions. We consider the set $\mathcal F$ whose elements are totally ordered families $F\subset \mathcal C^2(\mathbb R)$  of solutions of the periodic problem $(P)$ lying between $\alpha$ and $\beta$. Here, `totally ordered' means that for any $F\in\mathcal F$ and $x_1\not=x_2\in F$ one has either $x_1<x_2$ or $x_2<x_1$.

  The set $\mathcal F$ itself may be endowed with the partial order provided by inclusion. Observe that if $\mathcal S\subset\mathcal F$ is any totally ordered subset, then $F:=\bigcup_{S\in\mathcal S}S$ is an upper bound for $\mathcal S$. Consequently, by Zorn's Lemma $\mathcal F$ must have some maximal element $F_0$.

\vspace{0.2cm}

\begin{claim}{The following hold:
     \begin{enumerate}
     \item[($\frak F_1$)] Every monotone sequence $\{x_n\}_n\subset F_0$ converges in the $\mathcal C^2$ norm to some $x_*\in F_0$.
     \item[($\frak F_2$)] For each initial position $x_0\in[\alpha(0),\beta(0)]$ there exists an unique element $x\in F_0$ with $x(0)=x_0$.
     \end{enumerate}}
     \end{claim}
    We first show the Claim. In view of the Modification Lemma \ref{ML} it suffices to prove {\em ($\frak F_1$)} assuming that $f$ is bounded between $\alpha$ and $\beta$. Now, the fact that every monotone sequence in $F_0$ converges in the $\mathcal C^2$ norm to some $T$-periodic solution follows immediately from Proposition \ref{prp1}, and the maximality of $F_0$ implies that this limit must belong to $F_0$.

    To see {\em ($\frak F_2$)} we first observe that, as a consequence of {\em ($\frak F_1$)}, the set
    $$A:=\Big\{x(0):x\in F_0\Big\}$$
    of initial positions of elements in $F_0$, must be closed. If $A$ were not the whole interval $[\alpha(0),\beta(0)]$ then it would mean the existence of pairs of neighboring solutions of $(P)$, which we are assuming that is not the case. Then,  for each initial position $x_0\in[\alpha(0),\beta(0)]$ there exists some element $x\in F_0$ with $x(0)=x_0$, and since two such functions should intersect tangentially, {\em ($\frak F_2$)} follows from the uniqueness of initial value problems associated to (\ref{eu0}).

  \vspace{0.1cm}

  To conclude the proof of Proposition \ref{prrop} we define
  $$\Psi(t,s):=x_s(t)\,,\hspace{2cm}t\in\mathbb R,\ s\in[0,1],$$
  where $x_s$ is the only element of $F_0$ with $x_s(0)=(1-s)\alpha(0)+s\beta(0)$. Then $\Psi$ satisfies the three conditions {\em (a),(b),(c)} above and the result follows.
  \end{proof}
\end{proposition}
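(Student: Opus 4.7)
My plan is to proceed by dichotomy. First I would ask whether there exist ordered neighboring periodic solutions $x_1<x_2$ of $(P)$ lying between $\alpha$ and $\beta$. If so, Corollary~\ref{cor2} immediately delivers case $(I)$ (since one of $x_1,x_2$ is unstable in the past and in the future, and it lies between $\alpha$ and $\beta$). So the real content is the opposite case: assume that no two periodic solutions between $\alpha$ and $\beta$ are neighboring, and build the path $\Psi$ of $(II)$.

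To build $\Psi$ I would apply Zorn's Lemma to the family $\mathcal F$ whose elements are totally ordered sets (under pointwise inequality, which is automatically a total order on periodic solutions lying between $\alpha$ and $\beta$ thanks to Proposition~\ref{prop1}) of periodic solutions of $(P)$ lying between $\alpha$ and $\beta$. Any chain in $\mathcal F$ has its union as an upper bound, so Zorn yields a maximal element $F_0\in\mathcal F$, which contains both $\alpha$ and $\beta$. The main work is the Claim asserting $(\frak F_1)$ closure of $F_0$ under monotone limits, and $(\frak F_2)$ surjectivity onto $[\alpha(0),\beta(0)]$ of the initial-value map $x\mapsto x(0)$. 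For $(\frak F_1)$, after passing to the modified equation via Lemma~\ref{ML} so that $f$ is bounded between $\alpha$ and $\beta$, a monotone sequence $\{x_n\}\subset F_0$ is uniformly bounded and pointwise convergent, so Proposition~\ref{prp1} gives $\mathcal C^2$ convergence to a periodic solution $x_*$; if $x_*\notin F_0$, then $F_0\cup\{x_*\}$ would be a strictly larger chain, contradicting maximality.

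For $(\frak F_2)$, the set $A=\{x(0):x\in F_0\}$ of initial positions is closed thanks to $(\frak F_1)$, and $A\subset[\alpha(0),\beta(0)]$ contains $\alpha(0)$ and $\beta(0)$. If $A$ were a proper closed subset, it would have a complementary point $p\in(\alpha(0),\beta(0))\setminus A$; the connected component of $p$ in the complement of $A$ is an open interval $(a,b)$ whose endpoints $a,b\in A$ correspond to solutions $x_a<x_b$ in $F_0$, and these would be neighboring (any periodic solution between them would have initial value in $(a,b)$ and, being comparable with everything in $F_0$ by uniqueness, could be added to $F_0$, contradicting maximality, so actually no periodic solution between them exists). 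This contradicts the standing assumption and shows $A=[\alpha(0),\beta(0)]$. Uniqueness of the element of $F_0$ with prescribed initial position follows because two such solutions would intersect with equal values but necessarily distinct derivatives if distinct, violating uniqueness for initial value problems (total order forces one to dominate the other, so they agree at $t=0$ with the same derivative or they would cross).

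Finally I would set $\Psi(t,s):=x_s(t)$ where $x_s$ is the unique element of $F_0$ with $x_s(0)=(1-s)\alpha(0)+s\beta(0)$. Conditions $(a)$ and $(c)$ are immediate from the construction (the latter using Proposition~\ref{prop1}: distinct elements of $F_0$ are strictly ordered everywhere), and $(b)$ holds by definition. The step I expect to be the subtlest is the joint continuity needed for the $\mathcal C^{2,0}$ regularity of $\Psi$: given $s_n\to s$, monotonicity of $F_0$ together with $(\frak F_1)$ forces $x_{s_n}\to x_s$ in the $\mathcal C^2$ topology (squeeze between the two monotone subsequences $x_{s_n^-}$ and $x_{s_n^+}$, where $s_n^\pm$ are monotone subsequences converging to $s$ from below and above; by $(\frak F_1)$ both limits exist in $F_0$ and must coincide with $x_s$ by uniqueness of the initial value), yielding continuity of $\Psi$ and $\partial_t\Psi$, $\partial_{tt}\Psi$ jointly in $(t,s)$, which is precisely the $\mathcal C^{2,0}$ requirement.
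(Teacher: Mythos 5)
Your proposal is correct and follows essentially the same route as the paper: the same dichotomy on the existence of neighboring solutions, the same Zorn's Lemma construction of a maximal totally ordered family $F_0$, the same two-part Claim, and the same definition of $\Psi$ --- indeed you supply details the paper leaves implicit (why a proper closed $A$ forces a neighboring pair, and the squeeze argument for the $\mathcal C^{2,0}$ continuity of $\Psi$). One parenthetical remark is false, namely that pointwise inequality is ``automatically a total order'' on all periodic solutions between $\alpha$ and $\beta$ (Proposition \ref{prop1} only applies to an already ordered pair, and two periodic solutions may cross transversally without violating uniqueness), but this is harmless since your argument, like the paper's, applies Zorn to chains rather than to the full set of periodic solutions.
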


When the equation is conservative, $-\ddot u=f(t,u)$, past and future Lyapunov stability become equivalent concepts. We close this Section with a result which is specific for the conservative case:
\begin{corollary}\label{cor3}{Assume that there is uniqueness for initial value problems associated to  $-\ddot u=f(t,u)$, and that $\alpha<\beta$ are ordered $T$-periodic solutions. Then there is some unstable solution between them (it may be either $\alpha$, or $\beta$, or other).}
\begin{proof} In view of Proposition \ref{prrop}, it suffices to consider the case in which the $T$-periodic problem associated to our equation is degenerate between $\alpha$ and $\beta$. Assume also that, for instance, $\alpha$ is stable and consider the solution $u$ verifying $u(0)=\alpha(0)$ and $\dot u(0)=\dot\alpha(0)+\epsilon$, where $\epsilon>0$ is small. On the maximal interval of time $\mathcal I$ where $u$ lies between $\alpha$ and $\beta$, it cannot cross tangentially a periodic solution, and then, the continuous function $s$ defined implicitly by $$u(t)=\Psi\big(t,s(t)\big)\,,\hspace{2cm}t\in\mathcal I\,,$$ must satisfy $\dot u(t)>\Psi_t(t,s(t))$ for any time $t$ (here, $\Psi=\Psi(t,s)$ is given by the definition of degeneracy). One easily deduces from here that $s$ is strictly increasing on $\mathcal I$. On the other hand, the stability of $\alpha$ implies that $\mathcal I=[0,+\infty)$ and $s(t)$ converges to some small limit $0<\ell<1$ as $t\to+\infty$. It follows that the periodic solution $\Psi(\cdot,\ell)$ is unstable and concludes the proof.
 \end{proof}
 \end{corollary}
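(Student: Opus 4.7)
The plan is to apply Proposition \ref{prrop}, which splits the situation into two mutually exclusive cases: either $(I)$ there is already a periodic solution between $\alpha$ and $\beta$ unstable simultaneously in the past and in the future, leaving nothing to prove, or $(I\!I)$ the problem is degenerate between $\alpha$ and $\beta$, so that $\alpha$ and $\beta$ are joined by a continuous strictly increasing family $\Psi(\cdot,s)$, $s\in[0,1]$, of $T$-periodic solutions. All the real work concentrates on case $(I\!I)$.

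In case $(I\!I)$, if either endpoint is already unstable we are done; so assume $\alpha$ is Lyapunov stable (the situation for $\beta$ is symmetric). The idea is to perturb $\alpha$ in velocity and catch an unstable periodic solution as an interior limit in the family. Fix a small $\epsilon>0$ and let $u$ solve $-\ddot u=f(t,u)$ with $u(0)=\alpha(0)$ and $\dot u(0)=\dot\alpha(0)+\epsilon$. Stability of $\alpha$ forces $u$ to be defined on $[0,+\infty)$ and to stay in an arbitrarily small neighborhood of $\alpha$, hence strictly between $\alpha$ and $\beta$. For each $t\geq 0$, define $s(t)\in[0,1)$ implicitly by $u(t)=\Psi(t,s(t))$. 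Uniqueness of initial value problems forbids $u$ from being tangent to any leaf $\Psi(\cdot,s_0)$ (otherwise $u\equiv\Psi(\cdot,s_0)$ and then, matching $u(0)=\alpha(0)$, one would have $u\equiv\alpha$, contradicting $\dot u(0)\neq\dot\alpha(0)$). Combined with $\Psi_s>0$ and $\dot u(0)>\Psi_t(0,0)$, this yields $\dot u(t)>\Psi_t(t,s(t))$ for every $t$, and implicit differentiation of the defining identity then shows that $s$ is strictly increasing.

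Being monotone and bounded, $s(t)$ tends to some $\ell$, and $\ell<1$ because stability constrains $u$ to stay $O(\epsilon)$-close to $\alpha$. Applying the $\mathcal C^2$-compactness given by Proposition \ref{prp1} to the translates $u(\cdot+nT)$ on $[0,T]$ shows that $u$ is asymptotic in the future to the $T$-periodic solution $x_\ast:=\Psi(\cdot,\ell)$. Translating the initial condition of $u$ by a large integer multiple of $T$ produces solutions whose starting position lies arbitrarily close to $(x_\ast(0),\dot x_\ast(0))$ but whose backward orbit eventually reaches a neighborhood of $\alpha$, which is at a definite distance from $x_\ast$; this is precisely Lyapunov instability of $x_\ast$ in the past. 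The concluding step, and the one I expect to demand the most care, is to invoke the conservative hypothesis: for equations of the form $-\ddot u=f(t,u)$ the time-$T$ Poincar\'e map is area-preserving, and for area-preserving planar maps past and future Lyapunov stability of a fixed point are equivalent notions. Hence $x_\ast$ is unstable in the future as well, furnishing the required unstable periodic solution between $\alpha$ and $\beta$.
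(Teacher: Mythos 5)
Your proposal is correct and follows essentially the same route as the paper: reduce via Proposition \ref{prrop} to the degenerate case, perturb the velocity of the stable endpoint $\alpha$, show the leaf-parameter $s(t)$ increases to a limit $\ell\in(0,1)$, and conclude that $\Psi(\cdot,\ell)$ is unstable. The only difference is that you spell out the final step (past instability of $\Psi(\cdot,\ell)$ plus the past/future equivalence via area-preservation of the Poincar\'e map), which the paper leaves implicit by appealing to the remark preceding the corollary.
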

 \section{Appendix: Discussing the statements of Section \ref{sec2}}\label{sec5}
       The last Section of this paper is devoted to sketch the proofs of some results which were presented in Section \ref{sec2} and used subsequently.
       \begin{proof}[Proof of Lemma \ref{ML}] It combines elements from the proofs of Theorem 5.4 of Chapter I of \cite{dec-hab} on the one hand and Lemma 4.2 of \cite{ort-rob} on the other.
        We start by letting
         \begin{equation}\label{eu11}
         \tilde f(t,u,\dot u):=-u+\gamma(t,u)+f\big(t,\gamma(t,u),\delta(\dot u)\big)\,,\hspace{1.5cm}(t,u,\dot u)\in\mathbb R^3\,,\end{equation}
       where $$\gamma(t,u):=\begin{cases}
\alpha(t) &\text{if }u\leq\alpha(t)\\
u &\text{if }\alpha(t)\leq u\leq\beta(t)\\
\beta(t) &\text{if }u\geq\beta(t)
\end{cases}\,,\hspace{1.5cm}
\delta(v):=\begin{cases}-K&\text{if }v\leq-K\\
v&\text{if }-K\leq v\leq K\\
K&\text{if }v\geq K\\
\end{cases}\,,
$$ and the constant $K>0$ is chosen in such a way that
\begin{equation}\label{eu12}
\int_0^K\frac{v}{\varphi(v)}\,dv>\beta(0)-\alpha(0)\,,\hspace{1.5cm}
K>\max\Big\{\|\dot\alpha\|_\infty,\|\dot\beta\|_\infty\Big\}\,.
\end{equation}

Clearly, $\tilde f$ is continuous, $T$-periodic and bounded between $\alpha$ and $\beta$. Also {\em (i)} is immediate from the definition of $f$, while {\em (iii)} is a consequence of the Nagumo condition (choose $\epsilon>0$ small enough so that $\int_\epsilon^Kv\,dv/\varphi(v)>\beta(0)-\alpha(0)$).

The proof of {\em (ii)} is less direct and will rely on three previous results which we describe next. The first and the third ones focus attention on the side limits
$$D_l h(t_0):=\underset{\begin{matrix}t\to t_0\\ t<t_0\end{matrix}}{\esslim }\dot h(t)\,,\hspace{2cm}D_r h(t_0):=\underset{\begin{matrix}t\to t_0\\ t>t_0\end{matrix}}{\esslim }\dot h(t)\,,$$
where $h$ is some Lipschitz-continuous function. The derivative $\dot h$ may exist only almost everywhere; thus, the limits above are `essential limits', meaning that they must be considered after $\dot h$ is perhaps redefined on a zero-measure set. For arbitrary Lipschitz-continuous functions $h$ these limits do not always exist,
but it cannot be the case when $h=\alpha$ is a lower solution of $(P)$:

\vspace{0.3cm}

{\bf {\em 1.\  Regularity of lower solutions\footnote{Of course one may formulate a corresponding version for upper solutions. We state it for the periodic problem because this is the framework where it will be used, but a quick glance at its proof shows that the result does not depend on the boundary conditions.}:\ \ }}{\em  Let $\alpha$ be a lower solution of (P). Then, $D_l\alpha(t_0)$ and $D_r\alpha(t_0)$ exist for every $t_0\in\mathbb R$ and satisfy  $D_l\alpha(t_0)\leq D_r\alpha(t_0)$.

\vspace{0.1cm}

\begin{proof}  Let $\omega\in W^{2,\infty}_{loc}(\mathbb R)$ be any second primitive of the function $\ddot\omega(t)= -f(t,\alpha(t),\dot\alpha(t))$, and let $h:=\omega-\alpha$. In this way, $h$ is locally Lipschitz-continuous and $-\ddot h\geq 0$ in the distributional sense. But positive distributions are positive, locally finite Borel measures (see e.g. Theorem  6.22 of \cite{lie-los}). Consequently, $\dot h$ is decreasing and has side limits at each point. The result follows.
  \end{proof}                          }

 A second consequence of the fact that positive distributions are positive, locally finite Borel measures is a distributional version of the maximum principle in one dimension:

\vspace{0.3cm}

{\bf {\em 2.\ Maximum principle:}} {\em Let $h:(a,b)\to\mathbb R$ be Lipschitz-continuous and verify
$$-\ddot h\geq 0\text{ on }(a,b)\,,$$
in the distributional sense. If $h$ attains a local minimum at some point $t_0\in(a,b)$ then it is constant.}

 \vspace{0.1cm}
 \begin{proof}
 As before, $\dot h$ is a decreasing function. It implies the result.\end{proof}

The third result is an elementary observation which follows directly from our definition of the side limits $D_lh(t_0)$ and $D_rh(t_0)$. It gives information on these limits at any point $t_0$ where $h$ has a local minimum.

\vspace{0.3cm}

{\bf {\em 3.\ $D_lh$ and $D_rh$  at local minima:}} {\em Let $h:(a,b)\to\mathbb R$ be Lipschitz-continuous and attain a local minimum at $t_0\in (a,b)$. Assume also that the limits $D_lh(t_0)$ and $D_rh(t_0)$ exist. Then
$D_lh(t_0)\leq 0\leq D_rh(t_0).$
}

\vspace{0.1cm}
 \begin{proof}
 Integrating from $t_0$ to $t$ we see that, if $D_lh(t_0)$ exists, then
   $$\lim_{\begin{matrix}t\to t_0\\ t<t_0\end{matrix}}\frac{h(t)-h(t_0)}{t-t_0}=D_lh(t_0)\,,$$
i.e., the left-side derivative of $h$ at $t_0$ exists and has the same value. Similarly, if $D_rh(t_0)$ exists at some point $t_0$, then the right-side derivative there also exists and coincides with $D_rh(t_0)$. The result follows.
 \end{proof}

We complete now the proof of statement {\em (ii)} of Lemma \ref{ML}. We use a contradiction argument and assume, for instance, that $u:I\to\mathbb R$ is a solution of $(\tilde 1)$ such that $h:=u-\alpha$ attains a local minimum at some interior point $t_0\in I$ with $u(t_0)<\alpha(t_0)$. Since $u$ is a $\mathcal C^1$ function, result {\bf{\em 1.}} above implies that the side limits $D_l h(t_0),\ D_l h(t_0)$ exist and moreover $D_lh(t_0)\geq D_rh(t_0)$. But $t_0$ being a point where $h$ attains a local minimum, {\bf\em 3.} gives that $D_lh(t_0)= D_rh(t_0)=0$, or what is the same,
\begin{equation}\label{eu13}
\lim_{t\to t_0}\dot\alpha(t)=\dot u(t_0)\,.
\end{equation}

In particular, $|\dot u(t_0)|\leq\|\dot\alpha\|_\infty$, and for any $t$ on some neighborhood of $t_0$ one has
\begin{equation*}
u(t)<\alpha(t)\,,\hspace{2cm}|\dot u(t)|\leq K\,,
\end{equation*}
and then $-\ddot u(t)=-u(t)+\alpha(t)+f(t,\alpha(t),\dot u(t))$. Consequently, on this same neighborhood we have
$$-\ddot h(t)=-\ddot u(t)+\ddot\alpha(t)\geq-u(t)+\alpha(t)+f(t,\alpha(t),\dot u(t))-f(t,\alpha(t),\dot\alpha(t))\,,$$
and combining (\ref{eu13}) with the continuity of $f$ we see that
\begin{equation}\label{eu14}
-\ddot h(t)\geq\frac{1}{2}\,\big(-u(t_0)+\alpha(t_0)\big)>0
 \end{equation}
 on some (possibly smaller) neighborhood of $t_0$. But $h$ attains a local minimum at $t_0$, so that in view of  {\bf{\em 2.}} above it should be constant on this neighborhood. Then, $-\ddot h\equiv0$ there, contradicting (\ref{eu14}). This contradiction proves the result.

 \end{proof}

The proof of Lemma \ref{ML} actually gives some additional information which was not stated in Section \ref{sec2}. It implies that the nonlinearity $\tilde f$ may be chosen in such a way that $b(t,u,\dot u):=\tilde f(t,u,\dot u)+u$ is {\em globally bounded} on $\mathbb R^3$. This allows us to rewrite $(\tilde 1)$ as
\begin{equation}\label{eu15}
-\ddot u=-u+ b(t,u,\dot u)\,.
\end{equation}
We are led to the existence result for the periodic method of lower and upper solutions.
\begin{proof}[Proof of Proposition \ref{prop0}]
Schauder's Fixed-Point Theorem guarantees the existence of some $T$-periodic solution to (\ref{eu15}). Since this equation is equivalent to $(\tilde 1)$, in view of the comments following the statement of Lemma \ref{ML} in Section \ref{sec2} we see that this is a $T$-periodic solution of (\ref{eu0}) lying between $\alpha$ and $\beta$. It implies the result.
\end{proof}

\begin{proof}[Proof of Proposition \ref{prp1}]   This result is very standard. Our assumptions imply that $\{u_n\}$ and $\{\ddot u_n\}$ are uniformly bounded sequences. Then, one easily checks that $\{\dot u_n\}$ must also be uniformly bounded and we conclude that $\{u_n\}$ and $\{\dot u_n\}$ are equicontinuous. Ascoli-Arzela's lemma now implies that $\{u_n\}\to u$ in the $\mathcal C^1[0,T]$ topology. But since $u_n$ is a solution of (\ref{eu0}) for each $n$ we deduce that $u$ solves the same equation and $\{u_n\}\to u$ in the $\mathcal C^2[0,T]$ topology. The result follows.
\end{proof}
\begin{proof}[Proof of Proposition \ref{upp-low}] Clearly we may focus attention on the part of the statement concerning $x_{\max}$. On the other hand, after possibly modifying $f$ as in Lemma \ref{ML} we may assume that it is bounded between $\alpha$ and $\beta$. We consider the set $\Gamma$ of periodic solutions which lie between them:
$$\Gamma:=\left\{x\in\mathcal C^2(\mathbb R)\left|\begin{matrix}
x\text{ solves }(P)\\ \vspace{-0.4cm}\\
\alpha\leq x\leq\beta
\end{matrix}\right.\right\}$$

 This set is naturally ordered by the pointwise order $\leq$. Using Proposition \ref{prp1} we see that every totally ordered subset
 $A\subset\Gamma$ has an upper bound in $\Gamma$. Then, Zorn's lemma implies the existence of some maximal element $x_{\max}$. It verifies
 \begin{equation}\label{eu16}
 x\in\Gamma,\ \ x\geq x_{\max}\ \Rightarrow\  x\equiv x_{\max}\,.\end{equation}

 Using a contradiction argument assume now that there exists $x_*\in\Gamma$ with $$x_*(t_0)>x_{\max}(t_0)$$ for some $t_0\in\mathbb R$. Since $x_*\ngeq x_{\max}$, we may find times $t_-<t_0<t_+<t_-+T$ such that
 $$x_*(t_\pm)=x_{\max}(t_\pm)\,,\hspace{2cm}x_*(t)>x_{\max}(t)\ \forall t\in[t_-,t_+]\,.$$
We define $\alpha_*:\mathbb R\to\mathbb R$ by
$$\alpha_*(t):=\begin{cases}
x_*(t)\text{ if }t\in[t_-,t_+]\,,\\
x_{\max}(t)\text{ if }t\in(t_+,t_-+T)\,,\end{cases}$$
and extended by periodicity. A simple integration-by-parts argument shows that $\alpha_*$ is a new lower solution of $(P)$. Since $\alpha_*\leq\beta$, Proposition \ref{prop0} implies the existence of some $\tilde x\in\Gamma$ with $\tilde x\geq\alpha_*$, contradicting (\ref{eu16}). This concludes the proof.

\end{proof}

The proofs of Propositions \ref{DP} and \ref{prop}  use similar arguments to the proof of Proposition \ref{upp-low}, and we shall not reproduce them here. We conclude the paper with the proof of Proposition \ref{prop1}, which follows along the lines of Lemma 2.4 of \cite{jac-sch}.
\begin{proof}[Proof of Proposition \ref{prop1}] Using a contradiction argument, we assume that $\alpha\leq\beta$ is an ordered pair of lower and upper solutions of $(P)$ verifying
$$\alpha(t_0)<\beta(t_0)\,,\hspace{2cm}\alpha(t_1)=\beta(t_1)$$
for some  $t_0,t_1\in\mathbb R$. Using Lemma \ref{ML} it is not restrictive to assume that $f$ is bounded between $\alpha$ and $\beta$; moreover, the periodicity of $\alpha,\beta$ allows us to assume $t_0<t_1<t_0+T$. For any $y_0\in(\alpha(t_0),\beta(t_0))$ we consider the Dirichlet problem
$$(D_{y_0})\equiv\begin{cases}
-\ddot y=f(t,y,\dot y)\vspace{0.3cm}\\  y(t_0)=y(t_0+T)=y_0
\end{cases}\,.$$

Fix now points
$y^{-}_0<y_0^+\in(\alpha(t_0),\beta(t_0))$. Since
$\alpha$ and $\beta$ are lower and upper solutions for
$(D_{y_0^-})$, Proposition \ref{DP} implies that this problem
must have some solution $y^-$ lying between $\alpha$ and $\beta$.
In turn,  $y^-$ and $\beta$ are respectively lower and upper
solutions for  $(D_{y_0^+})$, which should have a solution
$y^+$ lying between $y^-$ and $\beta$. Now, $y^-$ and
$y^+$ coincide at $t=t_1$,  and being ordered, they must be
tangent there. This contradicts the uniqueness of initial value
problems and concludes the proof.
\end{proof}
               
\end{document}